\theoremstyle{plain}
\newtheorem{thm}{Theorem}[section]
\newtheorem*{thm*}{Theorem}
\newtheorem{prop}[thm]{Proposition}
\newtheorem{cor}[thm]{Corollary}
\newtheorem{lem}[thm]{Lemma}
\newtheorem{defn}[thm]{Definition}
\theoremstyle{definition}
\newtheorem{rmk}[thm]{Remark}
\newtheorem{example}[thm]{Example}
\newtheorem{question}[thm]{Question}
\newcommand{\PK}{\mathbb{CP}^k}
\newcommand{\PP}{\mathbb{CP}^2}
\newcommand{\supp}{{\rm supp}}
\newcommand{\field}[1]{\mathbb{#1}}
\newcommand{\CC}{\ensuremath{\field{C}}} 
\newcommand{\RR}{\ensuremath{\field{R}}} 
\newcommand{\QQ}{\ensuremath{\field{Q}}} 
\newcommand{\DD}{\ensuremath{\field{D}}} 
\newcommand{\ZZ}{\ensuremath{\field{Z}}}
\newcommand{\Po}{\ensuremath{\field{P}^1}} 
\newcommand{\Pt}{\ensuremath{\field{P}^2}}
\newcommand{\Pk}{\ensuremath{\field{P}^k}}
\newcommand{\eps}{\epsilon}
\newcommand{\Henon}{H\'{e}non }
\begin{document}

\title{Topology of Fatou Components for Endomorphisms of $\mathbb{CP}^k$: linking with the Green's Current}

\author[S.L. ~Hruska]{Suzanne Lynch Hruska$^1$}
\address{Department of Mathematical Sciences\\
University of Wisconsin Milwaukee\\
PO Box 413\\
Milwaukee, WI 53201\\
USA}
\email{shruska@uwm.edu}

\author[R. ~Roeder]{Roland K.\ W.\ Roeder$^2$}
\address{IUPUI Department of Mathematical Sciences\\
LD Building, Room 270\\
402 North Blackford Street\\
Indianapolis, Indiana 46202-3216\\ USA}
\email{rroeder@math.iupui.edu}

\date{\today}

\begin{abstract}

Little is known about the global topology of the Fatou set $U(f)$ for
holomorphic endomorphisms $f: \mathbb{CP}^k \rightarrow \mathbb{CP}^k$, when $k
>1$.  Classical theory describes $U(f)$ as the complement in $ \mathbb{CP}^k$
of the support of a dynamically-defined closed positive $(1,1)$ current.  Given
any closed positive $(1,1)$ current $S$ on $ \mathbb{CP}^k$, we give a
definition of linking number between closed loops in $\mathbb{CP}^k \setminus
\supp S$ and the current $S$.  It has the property that if $lk(\gamma,S) \neq
0$, then $\gamma$ represents a non-trivial homology element in $H_1(
\mathbb{CP}^k \setminus \supp S)$.

As an application, we use these linking numbers to establish that many classes
of endomorphisms of $\mathbb{CP}^2$ have Fatou components with infinitely
generated first homology.  For example, we prove that the Fatou set has
infinitely generated first homology for any polynomial endomorphism of
$\mathbb{CP}^2$ for which the restriction to the line at infinity is hyperbolic
and has disconnected Julia set.  In addition we show that a polynomial skew product
of $\mathbb{CP}^2$ has Fatou set with infinitely generated first homology if
some vertical Julia set is disconnected.  We then conclude with a section of
concrete examples and questions for further study.  \end{abstract}

\maketitle

\markboth{\textsc{S. Hruska and R. Roeder}}
  {\textit{Fatou Components for Endomorphism of $\mathbb{CP}^k$:
  linking with the Green's Current
  }}

\footnotetext[1]{
Research supported in part by a grant from the University of Wisconsin Milwaukee.}

\footnotetext[2 ]{Research was supported in part by startup funds from the Department of Mathematical Sciences at Indiana University Purdue University Indianapolis.}

\section{Introduction}

Our primary interest in this paper is the topology of the Fatou set for
holomorphic endomorphisms of $\mathbb{CP}^k$ (written as $\Pk$ in the remainder
of the paper).  We develop a type of linking number that in many cases allows
one to conclude that a given loop in the Fatou set is homologically
non-trivial.  One motivation is to find a generalization of the fundamental
dichotomy for polynomial (or rational) maps of the Riemann sphere: the Julia
set is either connected, or has infinitely many connected components.  Further,
this type of result paves the way to an exploration of a potentially rich
algebraic structure to the dynamics on the Fatou set.

Given a holomorphic endomorphism $f:\Pk \rightarrow \Pk$, the {\em Fatou set}
$U(f)$ is the maximal open set on which the iterates $\{f^n\}$ form a normal
family.  The {\em Julia set} $J(f)$ is the complement, $J(f) = \Pk \setminus
U(f)$.    The standard theory \cite{FS2,HP2,Ueda} gives a convenient
description of these sets in terms the {\em Green's current} $T$. Specifically,
$T$ is a dynamically defined closed positive $(1,1)$ current with the property
that $J(f) = \supp(T)$.  We provide relevant background about the Green's
current in Section \ref{SEC:GREENS_CURRENT}.  Throughout this paper we assume
the degree of $f$ is at least two (i.e. that the components of a lift of $f$ to
$\CC^{k+1}$, with no common factors, have degree at least two).

Motivated by this description of the Fatou set, in Section  \ref{SEC:LINKING}
we define a linking number $lk(\gamma,S)$ between a closed loop $\gamma \subset
\Pk \setminus \supp \ S$ and a closed positive $(1,1)$ current $S$.  In
Proposition \ref{PROP:LINKING_DEPENDS_ON_HOMOLOGY} we will show that it depends
only on the homology class of $\gamma$, and that it defines a homomorphism 
\begin{eqnarray*}
lk(\cdot,S): H_1(\Pk \setminus \supp \ S) \rightarrow \RR/\ZZ.
\end{eqnarray*}
In particular, a non-trivial linking number in $\RR/\ZZ$ proves that the
homology class of $\gamma$ is non-trivial.
The techniques are based on a somewhat similar theory in \cite{ROE_NEWTON}.

This linking number can also be restricted to loops within any open $\Omega
\subset \Pk \setminus \supp S$, giving a homomorphism $lk(\cdot,S): H_1(\Omega)
\rightarrow \RR/\ZZ.$ If $\Omega$ is the basin of attraction for an attracting
periodic point of a holomorphic endomorphism $f:\Pk \rightarrow \Pk$ and $S$ is
the Green's current, we will show in Proposition \ref{PROP:RATIONAL_LINKING}
that the image of this homomorphism is contained in $\QQ/\ZZ$.  This provides a
natural setting to show that, under certain hypotheses, the Fatou set $U(f)$
has infinitely generated first homology:
\begin{thm}\label{THM:GENERAL_TECHNIQUE}
Suppose that $f:\Pk \rightarrow \Pk$ is a holomorphic endomorphism and $\Omega
\subset U(f)$ is a union of basins of attraction of attracting periodic points
for $f$.  If there are $c \in H_1(\Omega)$ with linking number $lk(c,T) \neq 0$
arbitrarily close to $0$ in $\QQ/\ZZ$, then $H_1(\Omega)$ is infinitely
generated.
\end{thm}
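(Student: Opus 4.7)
The plan is to prove the contrapositive: if $H_1(\Omega)$ is finitely generated, then the set of nonzero values $\{lk(c,T) : c \in H_1(\Omega),\, lk(c,T) \neq 0\}$ must be bounded away from $0$ in $\RR/\ZZ$, directly contradicting the hypothesis. So I would assume $H_1(\Omega)$ is finitely generated and aim to produce such a positive lower bound.

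The setup I would use is a combination of the two propositions cited in the introduction. By Proposition \ref{PROP:LINKING_DEPENDS_ON_HOMOLOGY}, the map $lk(\cdot,T)$ descends to a group homomorphism $H_1(\Omega) \rightarrow \RR/\ZZ$, and by Proposition \ref{PROP:RATIONAL_LINKING}, under the basin-of-attraction hypothesis on $\Omega$, the image lands in $\QQ/\ZZ$. Write $G \subseteq \QQ/\ZZ$ for this image. Any homomorphic image of a finitely generated group is finitely generated, so $G$ is a finitely generated subgroup of $\QQ/\ZZ$.

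The key algebraic step is the elementary observation that every finitely generated subgroup of $\QQ/\ZZ$ is finite: if $G$ is generated by $q_1+\ZZ,\ldots,q_m+\ZZ$ and $N$ is a common denominator of $q_1,\ldots,q_m$, then $G \subseteq \tfrac{1}{N}\ZZ/\ZZ$, a cyclic group of order $N$. Consequently the nonzero elements of $G$ are separated from $0$ in $\RR/\ZZ$ by the positive distance $1/N$. This directly contradicts the assumption that $H_1(\Omega)$ contains classes with nonzero linking numbers arbitrarily close to $0$, so $H_1(\Omega)$ cannot be finitely generated.

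There is no real obstacle internal to the argument; all of the analytic and dynamical content has been absorbed into Propositions \ref{PROP:LINKING_DEPENDS_ON_HOMOLOGY} and \ref{PROP:RATIONAL_LINKING}, and what remains is a clean algebraic deduction. The one place to be slightly careful is to verify that the image really does inherit finite generation (which is automatic since quotients of finitely generated abelian groups are finitely generated), so that the rationality provided by Proposition \ref{PROP:RATIONAL_LINKING} can be combined with the finite-generation assumption to force finiteness.
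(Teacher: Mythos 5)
Your proof is correct and takes essentially the same approach as the paper: both arguments reduce the theorem to the fact that the linking homomorphism into $\QQ/\ZZ$ is incompatible with finite generation of $H_1(\Omega)$. You phrase the key algebraic step contrapositively (a finitely generated subgroup of $\QQ/\ZZ$ is finite, hence its nonzero elements are bounded away from $0$), whereas the paper argues directly that a subgroup with nonzero elements arbitrarily close to $0$ is dense in $\QQ/\ZZ$ and therefore infinitely generated; these are the same observation.
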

\noindent
(We prove Theorem \ref{THM:GENERAL_TECHNIQUE} in Section \ref{SEC:LINKING}.)
Note that the hypotheses of Theorem \ref{THM:GENERAL_TECHNIQUE} are satisfied
if there are piecewise smooth loops $\gamma \subset \Omega$ with $lk(\gamma,T)
\neq 0$ arbitrarily close to $0$ in $\QQ/\ZZ$.  In our applications, we often
find a loop $\gamma_0$ with nontrivial linking number, and then take an
appropriate sequence of iterated preimages $\gamma_n$ under $f^n$ so that
$lk(\gamma_n,T) \rightarrow 0$ in $\QQ/\ZZ$. 
\vspace{0.1in}

In order to apply this theory to specific examples, one needs a detailed
knowledge of the geometry of the Green's Current $T$.  In the second half of the paper
we consider two situations in which it can be readily applied to provide examples
of endomorphism $f$ of $\Pt$ having Fatou set $U(f)$ with infinitely
generated homology.

The first situation is for polynomial endomorphisms of $\Pt$, that is,
holomorphic maps of $\Pt$ that are obtained as the extension  a polynomial map
$f(z,w) = (p(z,w),q(z,w))$ on $\CC^2$. 
Such mappings (and their generalizations to $\Pk$) were studied in \cite{BEDFORD_JONSSON}.
Given a polynomial endomorphism $f:\Pt \rightarrow \Pt$, the line at
infinity, denoted by $\Pi$, is totally invariant and superattracting.
Therefore the restriction of $T$ to $\Pi$ can be understood using the
dynamics of the resulting rational map of $f_{|\Pi}$ and its Julia set $J_\Pi$.
In Section \ref{SEC:ENDO} we prove the following theorem.

\begin{thm}\label{THM:JPI_DISCONN}
Suppose that $f$ is a polynomial endomorphism of $\Pt$ with restriction $f_{|\Pi}$ to the line at infinity $\Pi$.  If $f_{|\Pi}$ is hyperbolic and $J_\Pi$ is disconnected, then the Fatou set $U(f)$ has
infinitely generated first homology.
\end{thm}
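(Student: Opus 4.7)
The plan is to apply Theorem \ref{THM:GENERAL_TECHNIQUE}. The line at infinity $\Pi$ is totally invariant and superattracting transversally, so every attracting periodic cycle of $f_{|\Pi}$ is automatically an attracting periodic cycle for $f:\Pt \to \Pt$. Since $f_{|\Pi}$ is hyperbolic and $J_\Pi \neq \Pi$, the Fatou set of $f_{|\Pi}$ is the non-empty union of the basins of its attracting cycles. Let $\Omega \subset U(f)$ denote the union of the corresponding basins in $\Pt$. Then $\Omega$ is a union of Fatou components of $f$ that contains the Fatou set of $f_{|\Pi}$, so $H_1(\Omega)$ is a direct summand of $H_1(U(f))$.

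First I construct a loop $\gamma_0 \subset \Omega$ with $lk(\gamma_0, T) \ne 0$ in $\QQ/\ZZ$. Since $J_\Pi$ is disconnected, write $J_\Pi = K_1 \sqcup K_2$ with $K_1, K_2$ non-empty and closed, and choose a Jordan curve $\gamma_0 \subset \Pi \cong \Po$ lying in the Fatou set of $f_{|\Pi}$ that separates $K_1$ from $K_2$. Hyperbolicity of $f_{|\Pi}$ implies that its postcritical set is a compact subset of its Fatou set, uniformly away from $J_\Pi$, so by a small perturbation we may also assume $\gamma_0$ avoids the postcritical set. The key analytic input is that the slice of the Green's current $T$ by the smooth invariant divisor $\Pi$ is the equilibrium probability measure $\mu$ of $f_{|\Pi}$ (a standard pluripotential-theoretic fact for polynomial endomorphisms, compatible with $\supp \mu = J_\Pi$). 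Applying the definition of linking number from Section \ref{SEC:LINKING} to the disk in $\Pi$ bounded by $\gamma_0$ and containing $K_1$ yields $lk(\gamma_0, T) \equiv \mu(K_1) \pmod{\ZZ}$. Since $\mu$ has full support on $J_\Pi$, we get $\mu(K_1) \in (0,1)$, hence $lk(\gamma_0, T) \ne 0$ in $\RR/\ZZ$. By Proposition \ref{PROP:RATIONAL_LINKING} this non-zero value lies in $\QQ/\ZZ$.

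Next I produce preimages $\gamma_n$ with $lk(\gamma_n, T) \to 0$. Inductively take $\gamma_{n+1}$ to be a connected component of $f^{-1}(\gamma_n) \cap \Pi$ mapping to $\gamma_n$ with topological degree one; because $\gamma_0$ avoids the postcritical set, each such preimage is a smooth Jordan curve contained in $\Omega$. The identity $f^{*}T = d \cdot T$ together with naturality of linking numbers gives $d \cdot lk(\gamma_{n+1}, T) \equiv lk(\gamma_n, T) \pmod{\ZZ}$, so among the (at least) $d$ preimages of $\gamma_n$ we may choose $\gamma_{n+1}$ with canonical representative in $(-1/2, 1/2]$ of absolute value at most $1/(2d)$. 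Iterating, $|lk(\gamma_n, T)| \le 1/(2 d^n) \to 0$, while $d^n \cdot lk(\gamma_n, T) \equiv lk(\gamma_0, T) \not\equiv 0 \pmod{\ZZ}$ forces each $lk(\gamma_n, T) \ne 0$. Theorem \ref{THM:GENERAL_TECHNIQUE} then implies that $H_1(\Omega)$, and therefore $H_1(U(f))$, is infinitely generated.

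The principal technical difficulty is the linking calculation $lk(\gamma_0, T) \equiv \mu(K_1) \pmod{\ZZ}$, which requires translating the pluripotential-theoretic slicing of $T$ by $\Pi$ into the concrete linking-number formula of Section \ref{SEC:LINKING}. The remaining ingredients are standard: hyperbolicity provides the required behavior of the postcritical set and hence the preimage construction, total invariance of $\Pi$ keeps each $\gamma_n$ inside $\Omega$, and disconnectedness of $J_\Pi$ is precisely what makes $\mu(K_1)$ lie strictly between $0$ and $1$.
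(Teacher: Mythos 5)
Your proof follows the paper's strategy in its main outline — identify $\Omega \subset U(f)$ as the union of basins corresponding to attracting cycles of $f_{|\Pi}$ (this is essentially Lemma~\ref{LEM:FAT_PI}), invoke $T\vert_\Pi = \mu_\Pi$ (Lemma~\ref{LEM:PI_RESTRICTION}), produce a first loop $\gamma_0\subset U(f_\Pi)$ with $lk(\gamma_0,T)\in(0,1)$, and then apply Theorem~\ref{THM:GENERAL_TECHNIQUE}. But the way you manufacture the sequence $\gamma_n$ with $lk(\gamma_n,T)\to 0$ has a genuine gap, and this is precisely where the paper does something different.

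You inductively take $\gamma_{n+1}$ to be a component of $f^{-1}(\gamma_n)\cap\Pi$ \emph{mapping with degree one}, asserting that avoidance of the postcritical set ensures its existence and that there are "at least $d$ preimages." Neither claim holds. Avoidance of the postcritical set guarantees only that each preimage component is a smooth Jordan curve, not that any component maps with degree one, and the number of components of $f^{-1}(\gamma_n)\cap\Pi$ can be as small as one (example: $z\mapsto z^d$ and the circle $|z|=2$ give a single preimage component of degree $d$; perturbations with disconnected Julia set can exhibit the same phenomenon). Moreover, even when a degree-one preimage curve exists, the value $lk(\gamma_{n+1},T)$ is the number $\tfrac1d\langle\Gamma_n,T\rangle$ determined by the disc $\Gamma_{n+1}\subset f^{-1}(\Gamma_n)$ that it actually bounds — it is not something you get to choose among the $d$ residues. (If a degree-one disc preimage $\Gamma_{n+1}$ existed at every stage you would in fact get $\langle\Gamma_n,T\rangle=\langle\Gamma_0,T\rangle/d^n$ automatically, so the "pick the residue of absolute value $\le 1/(2d)$" step is superfluous; the real problem is the existence of the degree-one disc preimage.) Patching this via a Riemann--Hurwitz count, as the paper does in the proof of Theorem~\ref{THM:MAIN}, runs into the fact that a degree-$d$ rational map of $\Po$ has $2d-2$ critical points rather than the at most $d-1$ affine critical points available for the fibered polynomial maps in the skew-product setting, so the inequality $\chi(f^{-1}(\Gamma_n))\ge 2$ is not so readily arranged.

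The paper avoids preimages entirely. Its Proposition~\ref{PROP:1VAR_FATOU} uses the Beardon theorem that a disconnected Julia set of a rational map has uncountably many components, each a limit of infinitely many others, to construct an \emph{increasing} sequence of unions of disjoint closed discs $\Gamma_1\subset\Gamma_2\subset\cdots$ in $\Po$ with $0<\langle\Gamma_{n-1},\mu_\Pi\rangle<\langle\Gamma_n,\mu_\Pi\rangle<1$ and boundaries in $U(f_\Pi)$. The $1$-cycles $c_n=\partial\Gamma_n-\partial\Gamma_{n-1}$ then have positive linking numbers summing to at most $1$, hence tending to $0$. This exploits the topology of the disconnected Julia set rather than the dynamics, and it is why the hypothesis of hyperbolicity is only needed to ensure $U(f_\Pi)$ is contained in a union of attracting basins — the linking-number decay itself requires no preimage construction. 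If you want to repair your argument while keeping the preimage flavor you would have to control the number and ramification of the preimage components, which is exactly what makes this route harder than the paper's.
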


\noindent
This theorem provides for many examples of polynomial endomorphisms $f$ of $\Pt$ with interesting homology of $U(f)$.
We present one concrete family in Example \ref{EXAMPLE:RABBITS}.

We then consider the special family of polynomial endomorphisms known as
polynomial skew products.   While Theorem 1.2 applies to certain polynomial
skew products, we develop additional sufficient criteria for $U(f)$ to have
interesting homology. 

A polynomial skew product is a polynomial endomorphism having the form $f(z,w) = (p(z),
q(z,w))$, where $p$ and $q$ are polynomials.  We assume that ${\rm deg}(p) =
{\rm deg} (q) = d$ and $p(z) = z^d + O(z^{d-1})$ and $q(z) = w^d
+O_z(w^{d-1})$, where we have normalized leading coefficients.  Since $f$
preserves the family of vertical lines $\{z \} \times \CC$, one can analyze $f$
via the collection of one variable fiber maps $q_z(w) = q(z,w)$, for each $z
\in \CC$.  In particular, one can define fiber-wise filled Julia sets $K_z$ and
Julia sets $J_z :=\partial K_z$ with the property that $w \in  \CC \setminus K_z$ if and only if
the orbit of $(z,w)$ escapes vertically to a superattracting fixed point $[0:1:0]$ at infinity.

For this reason, polynomial skew products provide an
accessible generalization of one variable dynamics to two variables and have
been previously studied by many authors, including Jonsson in \cite{JON_SKEW}
and DeMarco, together with the first author of this paper, in \cite{S}.
In Section~\ref{SEC:SKEW} we provide the basic background on
polynomial skew products and prove:

\begin{thm}\label{THM:MAIN} 
Suppose $f(z,w) = (p(z),q(z,w))$ is a polynomial skew product.
\begin{itemize}
\item If $J_{z_0}$ is disconnected for any $z_0 \in J_p$, then
$W^s([0:1:0])$ has infinitely generated first homology.

\item Otherwise, $W^s([0:1:0])$ is homeomorphic to an open ball.
\end{itemize}
 \end{thm}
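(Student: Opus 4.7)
The plan is to handle the two cases separately: Case (i) by a direct application of Theorem \ref{THM:GENERAL_TECHNIQUE}, and Case (ii) by an explicit uniformization of $W^s([0:1:0])$.

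\textbf{Case (i): disconnected fiber Julia set.} Fix $z_0\in J_p$ with $K_{z_0}$ disconnected, and write $K_{z_0}=A\sqcup B$ with $A,B$ nonempty, compact, and separated. Choose a smooth Jordan curve $\gamma_0$ in the vertical fiber $\{z_0\}\times\CC$ that encircles $A$ but not $B$. Since $\gamma_0\subset\{z_0\}\times(\CC\setminus K_{z_0})$, it is automatically a loop in $W^s([0:1:0])$. Let $\Sigma\subset\{z_0\}\times\CC$ be the topological disk bounded by $\gamma_0$ and containing $A$. Using that the slice of the Green's current $T$ by any vertical line is the fiber equilibrium measure $\mu_{z_0}$, a probability measure supported on $J_{z_0}$, I would compute
\[
lk(\gamma_0,T)\equiv\int_\Sigma T=\mu_{z_0}(A)\pmod{\ZZ},
\]
which lies in $(0,1)$ and hence is nonzero in $\RR/\ZZ$. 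To manufacture linking numbers converging to $0$, take a connected inverse branch $\gamma_n$ of $\gamma_0$ under $f^n$; it remains in $W^s([0:1:0])$ by backward invariance of the basin. The functional equation $f^*T=d\,T$ (with $d$ the algebraic degree of $f$), together with change of variables along the lifted disks $\Sigma_n$, then gives
\[
lk(\gamma_n,T)\equiv\frac{\mu_{z_0}(A)}{d^n}\pmod{\ZZ},
\]
a nonzero rational tending to $0$ in $\QQ/\ZZ$. Theorem \ref{THM:GENERAL_TECHNIQUE} now yields the first conclusion.

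\textbf{Case (ii): all fiber Julia sets connected.} Suppose $K_z$ is connected for every $z\in J_p$. Using the relation $q_z(K_z)=K_{p(z)}$ together with density of the backward orbit of $J_p$ in $K_p$, one first propagates connectedness (in the spirit of Jonsson's results on polynomial skew products) to all fibers over $K_p$. For each such $z$, the one-variable Böttcher theorem supplies a conformal map $\phi_z\colon\CC\setminus K_z\to\{|t|>1\}$ conjugating $q_z$ to $t\mapsto t^d$ and depending holomorphically on $z$. In the local chart $(u,v)=(z/w,1/w)$ near $[0:1:0]$, the normalizations $p(z)=z^d+O(z^{d-1})$ and $q(z,w)=w^d+O_z(w^{d-1})$ make $f$ a super-attracting perturbation of $(u,v)\mapsto(u^d,v^d)$; combining the fiber-wise $\phi_z$ with the Böttcher of $f_{|\Pi}(\zeta)=\zeta^d$ (in the coordinate $\zeta=x/y$ on $\Pi$) yields a biholomorphism $\Phi$ of $W^s([0:1:0])$ onto the open bidisk $\{|u|<1,|v|<1\}$, which is homeomorphic to an open $4$-ball.

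\textbf{Main obstacle.} The delicate step is the globalization in Case (ii): showing that the fiber-wise $\phi_z$'s, the Böttcher of $f_{|\Pi}$, and the local Böttcher of $f$ at $[0:1:0]$ all agree to define a single biholomorphism onto the bidisk. This requires holomorphic dependence of $\phi_z$ across the base, the correct limiting behavior as $z\to\infty$ (i.e.\ as one approaches $\Pi$), and careful matching of coordinates in the chart at $[0:1:0]$. By comparison, Case (i) is largely bookkeeping once the right loop $\gamma_0$ is chosen and the slice formula for $T$ on a vertical line is in hand.
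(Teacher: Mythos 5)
Your Case (i) has a genuine gap. Writing $\Sigma$ for the disc bounded by $\gamma_0$ in the fiber $\{z_0\}\times\CC$, your formula
\[
lk(\gamma_n,T)\equiv\frac{\mu_{z_0}(A)}{d^n}\pmod{\ZZ}
\]
tacitly assumes that the $n$-th inverse branch is univalent over all of $\Sigma$ (degree $1$), which need not hold: $\Sigma$ contains a nonempty chunk $A$ of $K_{z_0}$ and may well contain critical values of $q_{z_1},q_{z_2},\ldots$. If the preimage component $D_n$ bounded by $\gamma_n$ has covering degree $k_n>1$, the correct formula is $lk(\gamma_n,T)\equiv k_n\,\mu_{z_0}(A)/d^n$, and without an upper bound on $k_n$ you cannot rule out $k_n\,\mu_{z_0}(A)/d^n\in\ZZ$, i.e.\ a vanishing linking number, nor guarantee that the preimage component is even a disc. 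This is exactly what the paper's argument is engineered to fix: from the two loops $\eta_1,\eta_2$ encircling disjoint parts of $K_{z_0}$ one selects the one whose disc contains at most $d-2$ of the $d-1$ critical values of $q_{z_1}$; then Riemann--Hurwitz forces at least one preimage component to be a disc of degree $k_1\le d-1$, giving $0<\left<\Gamma_1,T\right>\le\frac{d-1}{d}\left<\Gamma_0,T\right><1$, and the same selection is repeated at every stage. That counting step is not mere bookkeeping; it is the heart of the argument.

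Your Case (ii) takes a genuinely different route, and it is incomplete in the way you yourself flag. You propose gluing fiber-wise B\"ottcher coordinates $\phi_z$, the B\"ottcher coordinate of $f_{|\Pi}$, and the local B\"ottcher map of $f$ at $[0:1:0]$ into a global biholomorphism onto the bidisk. The issues are real: $\phi_z$ has no reason to depend holomorphically (or even continuously) on $z$ near $J_p$; the coordinates must be matched both along $\Pi$ and near $[0:1:0]$; and for $z\notin K_p$ the fiber-wise picture you describe is not directly available. If it worked it would give a stronger (biholomorphic) statement than the paper proves, but the proposal does not resolve the globalization you correctly identify as the obstacle. The paper instead proves only a homeomorphism, via Morse theory: it shows $h=-G_z(w)$ extends to a proper PSH exhaustion of $W^s([0:1:0])\setminus\{[0:1:0]\}$ whose restriction to each complex line through $[0:1:0]$ has no critical points (this is where connectivity of every $J_z$ enters, via Jonsson's Proposition 6.3), verifies that very negative sublevel sets are star-convex, hence balls, and then flows along a $C^1$ vector field tangent to lines through $[0:1:0]$ along which $h$ decreases. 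That argument sidesteps regularity of B\"ottcher conjugacies over $J_p$ entirely, which is why it closes.
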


\noindent
The first statement is obtained by using Theorem \ref{THM:GENERAL_TECHNIQUE}, while the second is obtained using Morse Theory.

For any endomorphism there is also the measure of maximal
entropy $\mu = T \wedge T$.   Thus another candidate for the name ``Julia set'' is
$J_2 := \text{supp}(\mu)$. The Julia set that is defined as the
complement of the Fatou set is sometimes denoted by $J_1$, to distinguish it from $J_2$.

The condition from Theorem \ref{THM:MAIN} that for some $z_0 \in \CC$, $J_{z_0}$ is
disconnected might seem somewhat unnatural.  A seemingly more natural condition
might be that $J_2$ is disconnected, since for polynomial skew products it is
known (see \cite{JON_SKEW}) that $J_2 = \overline{\bigcup_{z \in J_p} J_z}$.
However, 
in Example \ref{EXAMPLE:J1DISCONNJ2CONN} we present certain polynomial skew products with
$J_2$ connected, but with the Fatou set having infinitely generated first
homology. (These examples are obtained by applying 
Theorem \ref{THM:MAIN} to  examples from \cite{JON_SKEW} and \cite{S}.) 
In fact, some of these examples persist over an open set within a one-variable holomorphic family of polynomial skew products.
Therefore, 
for polynomial skew products,
connectivity of the fiber Julia sets $J_z$ is at least as important as the connectivity of $J_2$ to understanding the homology of the
Fatou set.  

In Section \ref{SEC:QUADRATIC_FAMILY} we provide an example of a family of
polynomial skew products $f_a$ depending on a single complex parameter $a$ with
the following property: if $a$ is in the Mandelbrot set $\mathcal{M}$, then the Fatou
set $U(f_a)$ is homeomorphic to the union of three open balls, while if $a$ is outside of $\mathcal{M}$
then $H_1(U(f_a))$ is infinitely generated.

Since neither of the sufficient conditions from Theorems \ref{THM:JPI_DISCONN}
and \ref{THM:MAIN} extend naturally to general endomorphisms of $\Pk$, it remains a
mystery what is an appropriate condition for endomorphism to have non-simply
connected Fatou set.  We conclude Section \ref{SEC:FURTHER_APPS}, and this
paper, with a discussion of a few potential further applications of the
techniques of this paper to holomorphic endomorphisms of $\Pk$.

\subsection*{Acknowledgments}
We thank John H. Hubbard for bringing us and some central ideas together at the
start of this project.  We have benefited greatly from discussions with many
people, including Eric Bedford, Greg Buzzard, Laura DeMarco, Mattias Jonsson, Sarah Koch, 
Lex Oversteegen, Rodrigo Perez, Han Peters, Enrique Pujals and Nessim Sibony.
The second author thanks Mikhail Lyubich and Ilia Binder for their mathematical
guidance and financial support while he was a postdoctoral fellow.

We thank the anonymous referee for many helpful comments, particularly those
encouraging us to prove stronger statements in Theorems \ref{THM:MAIN} and
\ref{THM:QUADRATIC_FAMILY}.


\section{The Green's current $T$}
\label{SEC:GREENS_CURRENT}

We provide a brief reminder of the properties of the Green's
current that will be needed later in this paper.  We refer the reader who would
like to see more details to \cite{FS2,HP2,Ueda}.  While the following construction works
more generally for generic (algebraically stable) rational maps having points of
indeterminacy, we restrict our attention to globally holomorphic maps of
$\Pk$.

Suppose that $f:\Pk \rightarrow \Pk$ is holomorphic and that the Jacobian of
$f$ does not identically vanish on $\Pk$.  Then $f$ lifts to a polynomial map
$F:\CC^{k+1} \rightarrow \CC^{k+1}$ each of whose coordinates is a homogeneous
polynomial of degree $d$ and so that the coordinates do not have a common
factor.  It is a theorem that
\begin{eqnarray}\label{EQN:GREEN1}
G(z) = \lim_{n\rightarrow \infty} \frac{1}{d^n} \log ||F^n(z)||
\end{eqnarray}
\noindent
converges to a plurisubharmonic\footnote{We will often use the abbreviation PSH in place of plurisubharmonic and we use the convention that PSH functions
cannot be identically equal to $-\infty$.}
function $G:\CC^{k+1} \rightarrow [-\infty,\infty)$
called the {\em Green's function associated to $f$}.  Since $f$ is globally
well-defined on $\Pk$ we have that $F^{-1}(0) = 0$.  It has been established
that $G$ is Holder continuous and locally bounded on $\CC^{k+1} \setminus
\{0\}$.

If $\pi:\CC^{k+1} \setminus \{0\} \rightarrow  \Pk$ is the canonical
projection, there is a unique positive closed $(1,1)$ current $T$ on $\Pk$
satisfying $\pi^{*} T = \frac{1}{2\pi} dd^c G$.  (This normalization is not uniform--many authors do not divide by $2\pi$.)  More explicitly, consider any open set $V
\subset \Pk$ that is ``small enough'' so that a holomorphic section $\sigma : V
\rightarrow \CC^{k+1}$ of $\pi$ exists.  Then, on $V$ we have that $T$ is given
by $T = \frac{1}{2\pi} dd^c (G \circ \sigma)$.
    Choosing appropriate open sets covering $\Pk$ and sections of $\pi$ on each of them, the result extends to all of $\Pk$ producing a
single closed positive $(1,1)$ current on $\Pk$ independent of the choice of
open sets and sections used.  See \cite[Appendix A.4]{SI1}.
By construction, the Green's current satisfies the invariance $f^*T = d\cdot T$.  (See Section \ref{SUBSECTION:INVARIANCE_AND_RESTRICTION} for the definition of the pull-back $f^*T$.)

Recall that the Fatou set $U(f)$ is the maximal open set in $\Pk$ where the
family of iterates $\{f^n\}$ form a normal family and that the Julia set of $f$
is given by $J(f) = \Pk \setminus U(f)$.  A major motivation for studying the Green's current is the following.

\begin{thm}
Let $f:\Pk \rightarrow \Pk$ be a holomorphic endomorphism and let $T$ be the Green's current corresponding to $f$.  Then, $J(f) = \supp \ T$.
\end{thm}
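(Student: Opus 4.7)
The plan is to prove the equality $J(f) = \supp T$ by establishing the two opposite inclusions, translating each into a statement about the Green's function $G$ and then extracting (or obstructing) normality for the iterates $\{f^n\}$.

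For the inclusion $U(f) \subseteq \Pk \setminus \supp T$, fix $p \in U(f)$ and choose a simply connected neighborhood $V$ of $p$ on which $\{f^n\}$ is a normal family, together with a holomorphic section $\sigma : V \to \CC^{k+1}\setminus\{0\}$ of $\pi$. On $V$, the current $T$ is represented by $\frac{1}{2\pi}dd^c u$, where $u = G\circ\sigma$ is the locally uniform limit of the plurisubharmonic functions $u_n = \frac{1}{d^n}\log\|F^n\circ\sigma\|$. A direct computation shows that, up to a universal positive constant, $dd^c u_n = \frac{1}{d^n}(f^n)^*\omega_{FS}$, the pull-back of the Fubini--Study form on $\Pk$. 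Normality of $\{f^n\}$ together with compactness of the target $\Pk$ yields, via Cauchy estimates in local charts, uniform bounds on the derivatives $Df^n$ over each compact $V'\subset V$, so the coefficients of $(f^n)^*\omega_{FS}$ are locally uniformly bounded. Dividing by $d^n \to \infty$ then forces $dd^c u_n \to 0$ weakly, whence $u$ is pluriharmonic and $T$ vanishes on $V$; hence $p \notin \supp T$.

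For the reverse inclusion $\Pk\setminus\supp T \subseteq U(f)$, suppose $p\notin\supp T$ and let $V$ be a simply connected neighborhood of $p$ on which $T\equiv 0$. Then $G\circ\sigma$ is pluriharmonic on $V$, so we may write $G\circ\sigma = \Re h$ for some holomorphic $h:V\to\CC$. The essential quantitative input is the telescoping estimate
\[
|u_{n+1}(z)-u_n(z)| = \frac{1}{d^{n+1}}\left|\log\frac{\|F(F^n(z))\|}{\|F^n(z)\|^d}\right|\leq \frac{C_0}{d^{n+1}}
\]
on $\CC^{k+1}\setminus\{0\}$, which holds because the homogeneity of $F$ forces $\|F(z)\|/\|z\|^d$ to descend to a continuous positive function on the compact space $\Pk$. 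Summing the geometric series yields the uniform bound $|u_n-G|\leq C\, d^{-n}$. Define rescaled holomorphic lifts $\widetilde{F}_n : V \to \CC^{k+1}\setminus\{0\}$ by $\widetilde{F}_n(z) = e^{-d^n h(z)}\, F^n(\sigma(z))$; then
\[
\|\widetilde{F}_n(z)\| = e^{d^n(u_n(\sigma(z))-G(\sigma(z)))} \in [e^{-C}, e^C],
\]
so $\{\widetilde{F}_n\}$ is uniformly bounded above and bounded away from $0$, hence normal as a family of holomorphic maps into $\CC^{k+1}\setminus\{0\}$. Since $\pi\circ\widetilde{F}_n = f^n$ and $\pi$ is holomorphic on $\CC^{k+1}\setminus\{0\}$, the normality descends, showing $\{f^n\}$ is normal on $V$ and hence $p \in U(f)$.

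The main obstacle is in the second direction, where qualitative pluriharmonicity of $G$ must be upgraded to genuine normality of the iterates. The factor $d^n$ appearing in the exponent of $e^{-d^n h}$ threatens to make $\widetilde{F}_n$ blow up or collapse, and only the rapid convergence $|u_n - G| = O(d^{-n})$---itself a consequence of the homogeneity of $F$ combined with compactness of $\Pk$---can counterbalance this factor to confine $\widetilde{F}_n$ to a compact annular region of $\CC^{k+1}\setminus\{0\}$.
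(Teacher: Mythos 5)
The paper does not prove this theorem itself; it simply cites Forn{\ae}ss--Sibony and Ueda for it, so there is no internal proof to compare to. Your argument is correct and is, in essence, the standard one that appears in those references. Both inclusions are sound: for $U(f)\subseteq \Pk\setminus\supp T$, the identity $\tfrac{1}{2\pi}dd^c u_n = \tfrac{1}{d^n}(f^n)^*\omega_{FS}$ together with the local uniform boundedness of $Df^n$ (forced by normality and Cauchy estimates on the compact target $\Pk$) indeed kills $dd^c u$ in the limit; for the reverse inclusion, the key is exactly what you isolate --- the quantitative rate $|u_n - G| = O(d^{-n})$ coming from the degree-$d$ homogeneity of $F$ and $F^{-1}(0)=\{0\}$ --- which makes the rescaled lifts $\widetilde{F}_n = e^{-d^n h}F^n\circ\sigma$ land in a fixed compact annulus of $\CC^{k+1}\setminus\{0\}$, after which Montel and continuity of $\pi$ on that compact set give normality of $f^n = \pi\circ\widetilde{F}_n$. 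Two small remarks: the locally uniform convergence $u_n\to u$ invoked in the first half is itself a consequence of the telescoping estimate you only state in the second half, so stating it up front would tidy the logic; and in the first half you should say explicitly that the weak convergence $dd^c u_n\to dd^c u$ follows from $u_n\to u$ in $L^1_{\rm loc}$. Neither is a genuine gap.
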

\noindent
See, for example, \cite[Proposition 4.5]{FS2} or \cite[Theorem 2.2]{Ueda}.

\begin{rmk}\label{RMK:AFFINE_GREENS_FUNCTION}
If $f$ is a polynomial endomorphism, another form of Green's function, given by
\begin{eqnarray}\label{EQN:GREEN2}
G_{\rm affine}(z) = \lim \frac{1}{d^n} \log_+ ||f^n(z)||
\end{eqnarray}
\noindent
is often considered in the literature.
(Here $\log_+ = \max\{\log,0\}$.)  The result is again a PSH function $G:\CC^k \rightarrow [0,\infty)$.

We can relate $G_{\rm affine}$ to $G$ in the following way.  Consider the open
set $V=\CC^k \subset \Pk$.  Using the section
$\sigma(z_1,\cdots,z_k) = (z_1,\cdots,z_k,1)$, we find $G_{\rm
affine}(z_1,\cdots,z_k) = G \circ \sigma(z_1,\cdots,z_k)$ because $||F^k \circ
\sigma||$ only differs from $||f^k||$ by a bounded amount for each iterate
$k$.

Therefore, if $f$ is a polynomial endomorphism of $\Pk$, one can compute $T$ on $\CC^k$ using the formula $T = \frac{1}{2\pi} dd^c G_{\rm affine}$.
\end{rmk}

\begin{rmk}
Note that formulae (\ref{EQN:GREEN1}) and (\ref{EQN:GREEN2}) are independent of the norm $\|\cdot \|$ that is used since any two norms are equivalent up to a multiplicative constant.
\end{rmk}

\begin{rmk}\label{RMK:GREEN_1D}
When $k = 1$, the resulting Green's current is precisely the measure of maximal entropy $\mu_f$ whose support is the Julia set $J(f) \subset \Po$.  If $f$ is a polynomial, then $\mu_f$ also coincides with the harmonic measure on $K(f)$, taken with respect to the point at infinity. 
\end{rmk}

\section{Linking with a closed positive $(1,1)$ current in $\Pk$.}
\label{SEC:LINKING}

Suppose that $S$ is an (appropriately normalized) closed positive $(1,1)$
current on $\Pk$ and $\gamma \subset \Pk \setminus \supp(S)$ is a piecewise
smooth closed loop.  We will define a linking number $lk(\gamma,S) \in \RR /
\ZZ$, depending only on the homology class $[\gamma] \in H_1(\Pk \setminus
\supp(S))$. 

\subsection{Classical linking numbers in $\mathbb{S}^3$}
\label{SUBSEC:CLASSICAL_DEF}

Classically one considers the linking number of two oriented loops $c$ and $d$
in $\mathbb{S}^3$.  The linking number $lk(c,d) \in \mathbb{Z}$ is found by
taking any oriented surface $\Gamma$ with oriented boundary $c$ and defining
$lk(c,d)$ to be the signed intersection number of $\Gamma$ with $d$ as in
Figure \ref{LINK}.  For this and many equivalent definitions of linking number
in $\mathbb{S}^3$ see \cite[pp. 132-133]{ROLF}, \cite[pp. 229-239]{BO_TU}, and \cite[Problems
13 and 14]{MILNOR_TOP}.

\begin{figure}[!ht]\label{LINK}
\begin{center}
\begin{picture}(0,0)%
\epsfig{file=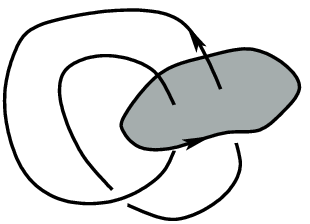}%
\end{picture}%
\setlength{\unitlength}{3947sp}%
\begingroup\makeatletter\ifx\SetFigFont\undefined%
\gdef\SetFigFont#1#2#3#4#5{%
  \reset@font\fontsize{#1}{#2pt}%
  \fontfamily{#3}\fontseries{#4}\fontshape{#5}%
  \selectfont}%
\fi\endgroup%
\begin{picture}(1577,1103)(1519,-966)
\put(2690,-393){\makebox(0,0)[lb]{\smash{{\SetFigFont{8}{9.6}{\familydefault}{\mddefault}{\updefault}{\color[rgb]{0,0,0}$\Gamma$}%
}}}}
\put(2912,-170){\makebox(0,0)[lb]{\smash{{\SetFigFont{8}{9.6}{\familydefault}{\mddefault}{\updefault}{\color[rgb]{0,0,0}$c$}%
}}}}
\put(1724, 53){\makebox(0,0)[lb]{\smash{{\SetFigFont{8}{9.6}{\familydefault}{\mddefault}{\updefault}{\color[rgb]{0,0,0}$d$}%
}}}}
\end{picture}%
\end{center}
\caption{Here  $lk(c,d) = +2$.}
\end{figure}

To see that this linking number is well-defined notice that assigning $lk(c,d) = [\Gamma]
\cdot [d]$, where $\cdot$ indicates the intersection product on
$H_*(\mathbb{S}^3,c)$, coincides with the classical definition.
(For background on the intersection product on homology, see \cite[pages 366-372]{BREDON}.)
 If $\Gamma'$
is any other 2-chain
 with $\partial \Gamma ' = c$ then $\partial (\Gamma -
\Gamma') = [c]- [c] = 0$ and $(\Gamma - \Gamma')$ represents a homology class in
$H_2(\mathbb{S}^3)$.  Since $H_2(\mathbb{S}^3) = 0$, $[\Gamma - \Gamma'] = 0$
forcing $[\Gamma - \Gamma'] \cdot [d] = 0$.  Therefore: $[\Gamma] \cdot [d] =
[\Gamma'] \cdot [d]$, so that $lk(c,d)$ is well defined.

\vspace{.1in}

\subsection{Generalization}
Given any closed positive $(1,1)$ current $S$ on $\Pk$ and any piecewise smooth two chain $\sigma$ in $\Pk$ with $\partial \sigma$ disjoint from $\supp \ S$,
we can define
\begin{eqnarray*}
\left<
\sigma,S \right> = \int_\sigma \eta_S
\end{eqnarray*}
\noindent
where $\eta_S$ is a smooth approximation of $S$ within it's cohomology class in
$\Pk-\partial \sigma$, see \cite[pages 382-385]{GH}.  The resulting number
$\left< \sigma,S \right>$ will depend only on the cohomology class of $S$ and
the homology class of $\sigma$ within $H_2(\Pk,\partial \sigma)$.  (Note that if $S$ is already a smooth form, one need not require 
that $\partial \sigma$ be disjoint from $\supp \ S$.)

Notice that $H_2(\Pk)$ is generated by the class of any complex projective line
$L \subset \Pk$.  Since $S$ is non-trivial, $\left<L,S\right> \neq 0$, so that
after an appropriate rescaling we can assume that $\left<L,S\right> =1$.  In
the remainder of the section we assume this normalization.  (It is satisfied by
the Green's Current from Section \ref{SEC:GREENS_CURRENT}.)

What made the linking numbers in $\mathbb{S}^3$ well-defined, independent of
the choice of $\Gamma$, is that $H_2(\mathbb{S}^3) = 0$.   One cannot
make the immediately analogous definition that $lk(\gamma,S) = \left<
\Gamma,S \right>$ in $\Pk$, since $H_2(\Pk) \neq 0$ implies that $\left< \Gamma,S
\right>$ can depend on the choice of $\Gamma$.  For example, given $\Gamma$
with $\partial \Gamma = \gamma$ then $\partial \Gamma' = \gamma$ for $\Gamma' =
\Gamma + L$, however $\left< \Gamma',S \right> - \left< \Gamma,S \right> =
\left<L,S \right>  = 1 \neq 0$.  

There is a simple modification: Given any $\Gamma$ and $\Gamma'$ both having
boundary $\gamma$, $[\Gamma' - \Gamma] \in H_2(\Pk)$ so that $[\Gamma' -
\Gamma] \sim k\cdot [L]$ for some $k \in \ZZ$.  Since $S$ is normalized, this
gives that $\left< \Gamma',S \right> = \left< \Gamma,S \right>  \ (\text{mod} \
1)$. 

\begin{defn}\label{DEFN:LK}
Let $S$ be a normalized closed positive $(1,1)$ current on $\Pk$ and let $\gamma$
be a piecewise smooth closed curve in $\Pk \setminus \supp(S)$.
We define the {\em linking number}
$lk(\gamma,S)$ by
\begin{eqnarray*}
lk(\gamma,S) := \left< \Gamma,S \right> \ (\text{mod} \ 1)
\end{eqnarray*}
\noindent
where $\Gamma$ is any piecewise smooth two chain with $\partial \Gamma =
\gamma$.
\end{defn}

Unlike linking numbers between closed loops in $\mathbb{S}^3$, it is often the
case that that $\left< \Gamma,S \right> \not \in \ZZ$, resulting in non-zero
linking numbers $(\text{mod} \ 1)$.  See Subsection \ref{SUBSEC:EXAMPLE_LK} for an explicit example.

\begin{prop}\label{PROP:LINKING_DEPENDS_ON_HOMOLOGY}
If $\gamma_1$ and $\gamma_2$ are homologous in $H_1(\Pk \setminus \supp \ S)$, then
$lk(\gamma_1,S) = lk(\gamma_2,S)$.
\end{prop}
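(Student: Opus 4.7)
The plan is to reduce the claim to showing that $\langle \Sigma, S \rangle \in \ZZ$ whenever $\Sigma$ is a piecewise smooth 2-chain lying entirely inside $\Pk \setminus \supp S$ (so that in particular $\partial \Sigma$ is disjoint from $\supp S$). Granted this reduction, the proposition follows by bookkeeping exactly analogous to the argument preceding Definition \ref{DEFN:LK}. Since $\gamma_1$ and $\gamma_2$ are homologous in $\Pk \setminus \supp S$, I can choose a piecewise smooth 2-chain $\Sigma \subset \Pk \setminus \supp S$ with $\partial \Sigma = \gamma_1 - \gamma_2$. Fix any piecewise smooth 2-chain $\Gamma_2 \subset \Pk$ with $\partial \Gamma_2 = \gamma_2$ (no constraint on $\supp S$), and set $\Gamma_1 := \Gamma_2 + \Sigma$, so that $\partial \Gamma_1 = \gamma_1$. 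Then Definition \ref{DEFN:LK} and bilinearity of the pairing give
\[
lk(\gamma_1, S) - lk(\gamma_2, S) \;\equiv\; \langle \Gamma_1, S \rangle - \langle \Gamma_2, S \rangle \;=\; \langle \Sigma, S \rangle \pmod{1},
\]
which is $\equiv 0$ once the reduced claim is established.

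To prove the reduced claim, I would exploit the fact that $S$ vanishes identically as a current on the open set $\Pk \setminus \supp S$. Consequently, for any open neighborhood $U \subset \Pk \setminus \supp S$ of $\Sigma$, the cohomology class $[S]$ restricts to zero in $H^2(U)$. Using a partition of unity subordinate to the cover of $\Pk \setminus \partial \Sigma$ by $U$ and by the complement of a compact neighborhood of $\Sigma$ contained in $U$, one can modify any fixed smooth representative of $[S]$ by an exact form so as to produce a smooth representative $\eta_S$ of $[S]$ on $\Pk \setminus \partial \Sigma$ that vanishes in a neighborhood of $\Sigma$. Then
\[
\langle \Sigma, S \rangle \;=\; \int_\Sigma \eta_S \;=\; 0 \;\in\; \ZZ,
\]
as required.

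The main obstacle will be making the partition-of-unity argument rigorous: one must produce the smooth form $\eta_S$ satisfying two constraints simultaneously — representing the correct class of $S$ in $H^2(\Pk \setminus \partial \Sigma)$ and vanishing on a neighborhood of $\Sigma$. The first is controlled by the behavior of $S$ on a neighborhood of $\supp S$, while the second follows from the local exactness $[S]|_U = 0$. Once one chooses a smooth representative near $\supp S$ (which is essentially a standard smoothing of the closed positive current in the spirit of \cite{GH}) and cuts it off away from $\Sigma$ by a plateau function, the difference with the original representative is exact, and one replaces the original representative by the cutoff version without changing the cohomology class. Modulo this technical point, which is routine for de Rham cohomology and extends to currents, the rest of the proof is the algebraic manipulation given above.
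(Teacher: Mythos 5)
Your proof is correct and follows essentially the same approach as the paper's: choose a piecewise-smooth $2$-chain $\Sigma \subset \Pk \setminus \supp S$ with $\partial\Sigma = \gamma_1-\gamma_2$, and integrate a smooth representative of $[S]$ that vanishes on a neighborhood of $\Sigma$; the paper reaches the same conclusion by observing that $\Sigma$, being compact in the open complement, is bounded away from $\supp S$, and then choosing the smooth approximation supported near $\supp S$. Your partition-of-unity/plateau construction is simply a more explicit spelling-out of the cutoff technique the paper delegates to the reference \cite{GH}.
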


\begin{proof}
Let $\Gamma$ be any piecewise smooth two chain contained in $\Pk
\setminus \supp \ S$ with $\partial \Gamma = \gamma_1 - \gamma_2$.  Then, since
$\Pk \setminus \supp \ S$ is open and $\Gamma$ is compact subset,
$\Gamma$ is bounded away from the support of $S$.  Consequently for any smooth
approximation $\eta_S$ of $S$  supported in a sufficiently small neighborhood of $S$,
we have $lk(\gamma_1,S) - lk(\gamma_2,S) = \int_\Gamma \eta_T = 0$.
\end{proof}

\begin{cor}\label{COR:NONZERO_LINK}If $\gamma \in \Pk \setminus \supp \ S$ with $lk(\gamma,S) \neq 0$, then $\gamma$ is a homologically non-trivial
loop in $\Pk \setminus \supp \ S$.
\end{cor}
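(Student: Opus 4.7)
The plan is to prove the contrapositive: if $\gamma$ is homologically trivial in $\Pk \setminus \supp S$, then $lk(\gamma,S) = 0$. This follows immediately from Proposition \ref{PROP:LINKING_DEPENDS_ON_HOMOLOGY}, but it is illuminating to recast the argument directly and see that Corollary \ref{COR:NONZERO_LINK} is essentially free once the preceding proposition is in hand.

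First I would observe that a homologically trivial class in $H_1(\Pk \setminus \supp S)$ is represented by the boundary of some piecewise smooth two-chain $\Gamma$ contained entirely in the open set $\Pk \setminus \supp S$. This is the key geometric fact, and it replaces the abstract homology argument with a concrete bounding chain that avoids $\supp S$. Because $\Gamma$ is compact and disjoint from the closed set $\supp S$, the distance $\dist(\Gamma, \supp S)$ is strictly positive.

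Next, I would use the same smooth-approximation approach as in the proof of Proposition \ref{PROP:LINKING_DEPENDS_ON_HOMOLOGY}: choose a smooth closed $(1,1)$ form $\eta_S$ cohomologous to $S$ whose support lies in an arbitrarily small neighborhood of $\supp S$. By the previous paragraph, we can arrange this neighborhood to be disjoint from $\Gamma$. Then
\begin{eqnarray*}
\langle \Gamma, S \rangle = \int_\Gamma \eta_S = 0,
\end{eqnarray*}
so by Definition \ref{DEFN:LK}, $lk(\gamma, S) = 0 \pmod 1$, contradicting the hypothesis $lk(\gamma,S) \neq 0$.

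There is essentially no obstacle here: the Corollary is a direct logical consequence of Proposition \ref{PROP:LINKING_DEPENDS_ON_HOMOLOGY}, since a homologically trivial loop is homologous to the empty loop, whose linking number is trivially zero. The only subtle point, already handled in the proof of the proposition, is that one must choose the bounding chain $\Gamma$ to lie in the complement of $\supp S$ (not merely in $\Pk$) so that the smoothing $\eta_S$ can be taken to vanish on $\Gamma$; this is precisely where the hypothesis that $\gamma$ bounds \emph{in $\Pk \setminus \supp S$}, rather than just in $\Pk$, is used.
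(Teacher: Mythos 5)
Your proof is correct and mirrors the paper's reasoning: the corollary is stated without its own proof because it is an immediate consequence of Proposition \ref{PROP:LINKING_DEPENDS_ON_HOMOLOGY}, and the smooth-approximation argument you reproduce is exactly the one used to prove that proposition. Nothing to add.
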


Since $lk(\gamma,S)$  depends only on the homology class of $\gamma$ and the pairing $\left<\cdot,S\right>$ is linear in the space
of chains $\sigma$ (having $\partial \sigma$ disjoint from $\supp \ S$), the linking number descends to a homomorphism:
\begin{eqnarray*}
lk(\cdot,S): H_1(\Pk \setminus \supp \ S) \rightarrow \RR/\ZZ.
\end{eqnarray*}
\noindent
Similarly $lk(\cdot,S) :H_1(\Omega) \rightarrow \RR/\ZZ$ for any open $\Omega \subset \Pk \setminus \supp \ S$.

\begin{rmk}{\bf (Topological versus Geometric linking numbers.)}\label{RMK:GEOMETRIC_VS_TOPOLOICAL_LK}
The classical linking number, and also 
Definition \ref{DEFN:LK}, depend only on the homology class of the loop $\gamma$ (in
the complement of some other loop of the support of some current, respectively.)

A linking number depending on the geometry of
$\gamma$ is given by
\begin{eqnarray*}\label{EQN:ALT_LINK_DEF}
\widehat{lk}(\gamma,T) := \left<\Gamma,S - \Omega \right> \in \mathbb{R},
\end{eqnarray*}
\noindent
\noindent
where $\partial \Gamma = \gamma$ and $\Omega$ is (normalization of) the K\"ahler form defining the
Fubini-Study metric on $\Pk$.  Given any $\Gamma$ and $\Gamma'$ both having boundary
$\gamma$ we have that  $\left<\Gamma-\Gamma',T - \Omega \right> = 0$, since $S$ and $\Omega$
are cohomologous.  (In the language of \cite[p. 132]{ROE_NEWTON}, we say that
$T - \Omega$ is in the ``linking kernel of $\Pk$''.)

Because $\supp \ \Omega = \Pk$, the statement of Proposition
\ref{PROP:LINKING_DEPENDS_ON_HOMOLOGY} does not apply.
Rather, $\widehat{lk}(\gamma,S)$  depends on the geometry of $\gamma \subset
\Pk \setminus \supp \ S$.  In fact, similar linking numbers were used in \cite{HL1,HL2} to
determine if a given real-analytic $\gamma$ has the appropriate geometry to be the boundary of a positive holomorphic
$1$-chain (with bounded mass).
\end{rmk}

\begin{rmk}\label{RMK:GENERAL_MANIFOLDS}{\bf (Other manifolds.)}
Suppose that $M$ is some other compact complex manifold with $H_2(M)$ of rank
$k$, generated by $\sigma_1,\ldots,\sigma_k$.  If
$\left<\sigma_1,S\right>,\ldots,\left<\sigma_k,S\right>$ are rationally
related,  then $S$ can be appropriately rescaled so that Definition
\ref{DEFN:LK} provides a well-defined linking number between any piecewise
smooth closed curve $\gamma \in M \setminus \supp S$ and $S$.  If $H_2(M)$ has
rank $k > 1$, this provides a rather restrictive cohomological condition on
$S$.   (It is similar to the restriction of being in the ``linking kernel'' described in \cite{ROE_NEWTON}.)
\end{rmk}

\subsection{Invariance and restriction properties of  $\langle \cdot , \cdot \rangle$}
\label{SUBSECTION:INVARIANCE_AND_RESTRICTION}

Suppose that $\Omega, \Lambda$ are open subsets of $\CC^j$ and $\CC^k$, and $f:
\Omega \rightarrow \Lambda$ is a (possibly ramified) analytic mapping.  Let $S$
be a closed positive $(1,1)$ current given on $\Lambda$ by $S = dd^c u$ for
some PSH function $u$.  If $f(\Omega)$ is not contained in the polar locus of
$u$, then the {\em pull-back of $S$ under} $f$ is defined by pulling back the
potential: $f^*(S) := dd^c (u \circ f)$.  Since $u \circ f$ is not identically
equal to $-\infty$, it is also a PSH function, and $f^*(S)$ is a well-defined
closed positive $(1,1)$ current.  

Suppose that $M$ and $N$ are complex manifolds and that $S$ is a closed
positive $(1,1)$ current on $N$.  If $f:M \rightarrow N$ is a holomorphic map
with $f(M)$ not entirely contained in the polar locus of $S$, then the
pull-back $f^*S$ can be defined by taking local charts and local potentials for
$S$.  See \cite[Appendix A.7]{SI1} and \cite[p.
330-331]{HP2} for further details.

\begin{prop}\label{PROP:PAIRING_INV}
Suppose that $S$ is a closed positive $(1,1)$ current on $N$ and $f:M
\rightarrow N$, with $f(M)$ not contained in the polar locus of  $S$.
If $\sigma$ is a piecewise smooth two chain in $M$ with $\partial \sigma$
disjoint from $\supp \ f^* S$, then 
$\left< f_* \sigma, S \right> = \left< \sigma, f^*S \right>$.
\end{prop}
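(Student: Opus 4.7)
The plan is to reduce the identity from currents to the classical naturality of integration for smooth forms. The three ingredients are: (i) the pairing $\left<\sigma,\cdot\right>$ depends only on the cohomology class of a smooth representative in the complement of $\partial\sigma$; (ii) $\int_{f_*\sigma}\eta = \int_\sigma f^*\eta$ for any smooth form $\eta$, the usual change of variables; and (iii) the compatibility $f^*(dd^c v) = dd^c(v\circ f)$ for potentials, which is exactly the local definition of $f^*$ recalled immediately before the proposition.

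First, I would invoke the $\partial\bar\partial$-Lemma (globally, since $N$ here is to be applied as $\Pk$ or another compact K\"ahler manifold, or locally via the $dd^c$-Poincar\'e Lemma and a partition of unity) to choose a smooth closed $(1,1)$-form $\eta$ on $N$ in the cohomology class of $S$, together with a current $v$ (a PSH function modulo smooth) such that $S-\eta = dd^c v$ on $N$. Taking $\eta$ supported in a sufficiently small neighborhood of $\supp S$, I arrange that $v$ is smooth off this neighborhood, hence smooth near $f(\partial\sigma)$. Thus $\eta$ qualifies as a smooth approximation of $S$ within its cohomology class in $N\setminus f(\partial\sigma)$, and the definition of the pairing yields $\left<f_*\sigma,S\right> = \int_{f_*\sigma}\eta$.

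Second, ingredient (ii) gives $\int_{f_*\sigma}\eta = \int_\sigma f^*\eta$, which is immediate because $\eta$ is smooth and $\sigma$ is piecewise smooth. Third, I would verify that $f^*\eta$ is itself a smooth approximation of $f^*S$ within its cohomology class in $M\setminus\partial\sigma$. By ingredient (iii) applied to the global identity $S-\eta = dd^c v$, we have $f^*S - f^*\eta = dd^c(v\circ f)$ as currents on $M$; the composition $v\circ f$ is meaningful since $f(M)$ is not contained in the polar locus of $v\subseteq\supp S$. The hypothesis $\partial\sigma\cap\supp f^*S = \emptyset$ then implies that $dd^c(v\circ f) = f^*S - f^*\eta$ is smooth in a neighborhood of $\partial\sigma$, so $f^*\eta$ is cohomologous to $f^*S$ in $M\setminus\partial\sigma$. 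Unwinding the pairing definition produces $\int_\sigma f^*\eta = \left<\sigma, f^*S\right>$, completing the identity.

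The main technical obstacle I expect is the potential mismatch between the given condition $\partial\sigma\cap\supp f^*S = \emptyset$ and the slightly stronger condition $f(\partial\sigma)\cap\supp S = \emptyset$ needed to cleanly force $v$ to be smooth near $f(\partial\sigma)$; in general one only has $\supp f^*S \subseteq f^{-1}(\supp S)$, with possibly strict inclusion when $f$ collapses directions along which $S$ fails to have a pluriharmonic potential. If this is problematic, I would replace $\eta$ by a sequence of smooth representatives $\eta_n$ whose supports shrink to $\supp S$, establish the identity for each $n$ by the above steps, and pass to the limit using the weak convergence $f^*\eta_n \to f^*S$ together with the fact that both sides are controlled on a fixed open neighborhood of $\partial\sigma$ that stays a positive distance from $\supp f^*S$.
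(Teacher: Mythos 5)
Your proposal takes essentially the same route as the paper: replace $S$ with a smooth representative $\eta_S$ in its cohomology class, invoke the change-of-variables identity $\int_{f_*\sigma}\eta_S = \int_\sigma f^*\eta_S$, and observe that $f^*\eta_S$ is a smooth representative of the class of $f^*S$. The extra detail you supply about $dd^c$-potentials and the $\partial\bar\partial$-lemma is in the same spirit as what the paper takes for granted by citing \cite[pages 382--385]{GH}.

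The more interesting point is the ``technical obstacle'' you flagged, because it is in fact a genuine gap in the paper's own argument. The paper's second sentence reads: ``Since $\partial \sigma$ is disjoint from $\supp f^* S$, $\partial f(\sigma)$ is disjoint from $\supp S$.'' As you observe, this asserts the inclusion $f^{-1}(\supp S)\subseteq\supp f^*S$, the converse of the inclusion $\supp f^*S\subseteq f^{-1}(\supp S)$ that is automatic (if a local potential $u$ for $S$ is pluriharmonic near $f(x_0)$ then $u\circ f$ is pluriharmonic near $x_0$). The converse fails in general for non-submersive $f$: take $N=\CC^2$, $S=dd^c\max(\log|w|,0)$, $M=\CC$, and $f(z)=(z,w_0)$ with $|w_0|=1$. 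Then $f(M)$ avoids the polar locus $\{w=0\}$, $u\circ f\equiv 0$ so $f^*S=0$ and $\supp f^*S=\emptyset$, while $f(M)\subset\supp S$. In such a situation the stated hypothesis $\partial\sigma\cap\supp f^*S=\emptyset$ is vacuous, yet $f(\partial\sigma)\subset\supp S$, so the smooth-approximation-of-$S$-away-from-$f(\partial\sigma)$ step cannot be carried out, and indeed $\left<f_*\sigma,S\right>$ is not even defined by the paper's own pairing (which requires $\partial$ disjoint from the support unless the current is smooth). The honest fix is to strengthen the hypothesis to $f(\partial\sigma)\cap\supp S=\emptyset$, under which the paper's four-line proof goes through verbatim. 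That stronger hypothesis is exactly what is imposed in Corollary \ref{COR:RESTRICT_THEN_INTEGRATE} and is plainly satisfied everywhere the proposition is invoked in the dynamical applications (there $S=T$ is the Green's current on $\Pk$, $\supp T$ is totally invariant, and $f^*T=dT$, so $\supp f^*T=f^{-1}(\supp T)=\supp T$), so nothing downstream is affected.

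Your proposed regularization-of-$\eta_n$ workaround is sound in spirit but does not fully resolve the issue on its own: if $f(\partial\sigma)$ genuinely meets $\supp S$, then for every approximating $\eta_n$ supported near $\supp S$ you have $\supp\eta_n\cap f(\partial\sigma)\neq\emptyset$, so $\int_{f_*\sigma}\eta_n$ is no longer provably independent of the approximation, and the left-hand side $\left<f_*\sigma,S\right>$ is simply undefined in the paper's framework. The cleanest repair is the strengthened hypothesis above, not a limit argument.
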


\begin{proof}
Since $f(M)$ is not contained in the polar locus of $S$, $f^*S$ is well-defined.
Since $\partial \sigma$ is disjoint from $\supp f^* S$, $\partial f(\sigma)$ is disjoint from $\supp S$.  
Let $\eta_S$ be a smooth approximation of $S$ in the same cohomology class as $S$
and having support disjoint from $\partial f(\sigma)$.
Then, $\left< f_* \sigma, S \right> = \int_{f_* \sigma} \eta_S = \int_\sigma f^* \eta_S
= \left< \sigma, f^*S \right>$, since $f^* \eta_S$ is a smooth approximation of $f^*S$.
\end{proof}

In the case that $M$ is an analytic submanifold of $N$ not entirely contained
in the polar locus of $S$, the restriction of $S$ to $M$ is defined by
$S\vert_M := \iota^*S$, where $\iota:M \rightarrow N$ is the inclusion.  When
computing linking numbers, we will often choose $\Gamma$ within some
one-complex dimensional curve $M$ in $N$, with $M$ not contained in the polar
locus of $S$.  In that case $S|_M$ is a positive measure on $M$ and we can use
the following:

\begin{cor}\label{COR:RESTRICT_THEN_INTEGRATE}
Let $S$ be a positive closed $(1,1)$ current on $N$ and $M$ be an analytic
curve in $N$ that is not entirely contained in the polar locus of $S$.  If
$\Gamma$ is a piecewise smooth two chain in $M$ with $\iota(\partial \Gamma)$
disjoint from $\supp \ S$, then
\begin{eqnarray}
\left<\iota(\Gamma),S \right> =\int_\Gamma S \vert_M.
\end{eqnarray}
\end{cor}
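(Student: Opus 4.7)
The plan is to deduce this corollary directly from Proposition \ref{PROP:PAIRING_INV} applied to the inclusion $\iota:M \hookrightarrow N$, and then unpack what the abstract pairing $\langle \Gamma, S\vert_M\rangle$ means when $M$ has complex dimension one.

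First I would verify the hypotheses of Proposition \ref{PROP:PAIRING_INV} for $f = \iota$. By assumption $M$ is not entirely contained in the polar locus of $S$, so $\iota^* S = S\vert_M$ is a well-defined closed positive $(1,1)$ current on $M$. Since $\iota(\partial \Gamma)$ is disjoint from $\supp S$, and $\supp(\iota^* S) \subset \iota^{-1}(\supp S)$, it follows that $\partial \Gamma$ is disjoint from $\supp(S\vert_M)$. Proposition \ref{PROP:PAIRING_INV} then gives
\begin{eqnarray*}
\langle \iota(\Gamma), S\rangle \;=\; \langle \iota_* \Gamma, S\rangle \;=\; \langle \Gamma, S\vert_M\rangle.
\end{eqnarray*}

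Next I would identify $\langle \Gamma, S\vert_M\rangle$ with the integral $\int_\Gamma S\vert_M$. Because $M$ is a complex curve (real dimension two), a closed positive $(1,1)$ current on $M$ is nothing more than a locally finite positive Borel measure $\mu := S\vert_M$, where $\int_\Gamma \mu$ is well-defined precisely because $\partial \Gamma$ is disjoint from the closed set $\supp \mu$ (so the chain $\Gamma$ has $\mu$-null boundary, and no mass is lost at the edge). The pairing, by the definition recalled from \cite{GH}, is computed as $\langle \Gamma, \mu\rangle = \int_\Gamma \eta$ for any smooth form $\eta$ cohomologous to $\mu$ on $M \setminus \partial \Gamma$.

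The step I expect to require the most care is showing that this last integral equals $\int_\Gamma \mu$. The strategy is to exploit the fact that $\supp \mu$ is closed in $M$ and disjoint from the compact set $\partial \Gamma$, so we can choose the smooth approximation $\eta$ so that $\eta - \mu = d\beta$ for a current $\beta$ supported in an arbitrarily small neighborhood $V$ of $\supp \mu$ with $\overline V \cap \partial \Gamma = \emptyset$. Then Stokes gives
\begin{eqnarray*}
\int_\Gamma \eta - \int_\Gamma \mu \;=\; \int_\Gamma d\beta \;=\; \int_{\partial \Gamma} \beta \;=\; 0,
\end{eqnarray*}
since $\beta$ vanishes on $\partial \Gamma$. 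Combining this with the identity above yields $\langle \iota(\Gamma), S\rangle = \int_\Gamma S\vert_M$, proving the corollary.
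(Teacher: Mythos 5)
Your proposal is correct and follows essentially the same route as the paper's proof: invoke Proposition~\ref{PROP:PAIRING_INV} with $f=\iota$ to reduce to the pairing $\langle \Gamma, S\vert_M\rangle$, then use the fact that a closed positive $(1,1)$ current on a curve is a positive measure to identify the pairing with $\int_\Gamma S\vert_M$. The paper simply asserts this last identification (citing the smooth approximation definition), whereas you spell out the Stokes-type argument behind it; that elaboration is sound in spirit, though the claim that the primitive $\beta$ may be taken supported in a small neighborhood of $\supp \mu$ deserves a word of justification (it holds because $\mu-\eta$ has zero total mass on each small tube around a component of $\supp\mu$, so a compactly supported primitive exists there), but this is a minor point the paper itself does not dwell on.
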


\begin{proof}
Proposition \ref{PROP:PAIRING_INV} gives $\left< \iota(\Gamma),S \right> \equiv \left<\iota_* \Gamma,S\right> = \left<\Gamma,\iota^* S\right>  = \left< \Gamma, S
\vert_M \right>.$  Any positive $(1,1)$ current on $M$ is a positive
measure.  Thus, $\int_\Gamma S \vert_M$ is defined, and coincides with the
result obtained by first choosing a smooth approximation to $S \vert_M$.
Thus $\left< \Gamma, S \vert_M \right> = \int_\Gamma S|_M$.
\end{proof}
\noindent
In the remainder of the paper, we will not typically distinguish between $\Gamma$ and $\iota(\Gamma)$.

\subsection{Linking with the Green's Current}
\label{SUBSEC:EXAMPLE_LK}
We conclude the section with some observations specific to the Green's current
$T$, including the proof of Theorem \ref{THM:GENERAL_TECHNIQUE}, as well as an
example illustrating the definitions given above.  It is worth noting that the
Green's current has empty polar locus, since $G$ is locally bounded on
$\CC^{k+1} \setminus \{0\}$, so that the hypotheses of Proposition
\ref{PROP:PAIRING_INV} and Corollary \ref{COR:RESTRICT_THEN_INTEGRATE} are easy
to check.

\begin{prop}\label{PROP:RATIONAL_LINKING}
Suppose that $f: \Pk \rightarrow \Pk$, $W^s(\zeta) \subset U(f)$ is the basin of attraction of some attracting
periodic cycle $\zeta$, and $T$ is the Green's Current of $f$.  Then 
\begin{eqnarray*}
lk(\cdot,T) :H_1(W^s(\zeta)) \rightarrow \ZZ[1/d]/\ZZ \subset \QQ/\ZZ.
\end{eqnarray*}
\end{prop}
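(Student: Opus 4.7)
The plan is to exploit the invariance identity $f^*T = d \cdot T$ together with the fact that iterates of a loop in an attracting basin eventually become nullhomologous.

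First, I would establish the key transformation law: for any piecewise smooth closed loop $\gamma \subset U(f) = \Pk \setminus \supp T$, the loop $f \circ \gamma$ is again piecewise smooth and, by forward invariance of the Fatou set, lies in $\Pk \setminus \supp T$. Choose a piecewise smooth two-chain $\Gamma$ with $\partial \Gamma = \gamma$. Then $f_*\Gamma$ is a two-chain with $\partial (f_* \Gamma) = f_* \gamma = f \circ \gamma$. Applying Proposition \ref{PROP:PAIRING_INV} together with the invariance $f^*T = d \cdot T$ gives
\begin{eqnarray*}
\langle f_* \Gamma, T \rangle = \langle \Gamma, f^*T \rangle = d \cdot \langle \Gamma, T \rangle,
\end{eqnarray*}
so that $lk(f \circ \gamma, T) = d \cdot lk(\gamma, T) \pmod 1$. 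Iterating, I obtain $lk(f^n \circ \gamma, T) = d^n \cdot lk(\gamma, T) \pmod 1$ for every $n \geq 0$.

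Second, I would use the hypothesis $\gamma \subset W^s(\zeta)$ to kill the left-hand side for $n$ large. Since $\zeta$ is an attracting periodic cycle, each point of $\zeta$ admits an open neighborhood $V$ contained in $W^s(\zeta)$ (hence in the Fatou set) on which the appropriate iterate of $f$ is a contraction; in particular $V$ may be taken to be a topological ball, so any loop inside $V$ bounds a two-chain inside $V \subset \Pk \setminus \supp T$. For $n$ sufficiently large, uniform convergence of $f^n \circ \gamma$ to $\zeta$ places the (connected) image loop $f^n \circ \gamma$ inside one such ball around a cycle point. Choosing a contracting two-chain $\Gamma_n$ for $f^n \circ \gamma$ inside this ball, a smooth approximation $\eta_T$ of $T$ supported in a tiny neighborhood of $\supp T$ will be disjoint from $\Gamma_n$, yielding $\langle \Gamma_n, T \rangle = \int_{\Gamma_n} \eta_T = 0$ and therefore $lk(f^n \circ \gamma, T) = 0$.

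Combining the two steps, $d^n \cdot lk(\gamma, T) \equiv 0 \pmod 1$ for all sufficiently large $n$, so $lk(\gamma, T) \in \tfrac{1}{d^n} \ZZ / \ZZ \subset \ZZ[1/d]/\ZZ$. Since $lk(\cdot, T)$ is already known to descend to a homomorphism on $H_1(W^s(\zeta))$ by Proposition \ref{PROP:LINKING_DEPENDS_ON_HOMOLOGY}, the image of this homomorphism lies in $\ZZ[1/d]/\ZZ$, as claimed. The main (mild) obstacle is justifying that the contracting two-chain for $f^n \circ \gamma$ can be chosen inside $W^s(\zeta)$, which amounts to the standard fact that each point of an attracting cycle possesses a simply connected, forward-invariant Fatou neighborhood that eventually absorbs $\gamma$.
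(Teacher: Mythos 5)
Your proof is correct and follows essentially the same route as the paper's: push $\gamma$ forward under iterates until $f^n(\gamma)$ is absorbed by a small ball near the cycle (hence nullhomologous in $W^s(\zeta)$), then use $f^*T = d\cdot T$ together with Proposition \ref{PROP:PAIRING_INV} to pull the resulting integrality back to $\gamma$, giving $d^n\,lk(\gamma,T)\equiv 0 \pmod 1$. The only cosmetic difference is that you isolate the transformation law $lk(f\circ\gamma,T) = d\cdot lk(\gamma,T) \pmod 1$ and iterate, whereas the paper computes $\langle f^n_*\Gamma,T\rangle = d^n\langle\Gamma,T\rangle$ in a single step; the content is identical.
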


\begin{proof}
Suppose that $\zeta$ is of period $N$.  Then, the basin of attraction
$W^s(\zeta)$ contains a union of small open balls $B_0,\ldots,B_{N-1}$ centered
at each point $\zeta,\ldots,f^{N-1}(\zeta)$ of the orbit $\zeta$.  Since
$H_1(W^s(\zeta))$ is generated by the classes of piecewise smooth loops, it is
sufficient to consider a single such loop $\gamma$.  Since $\gamma$ is a
compact subset of $W^s(\zeta)$, there is some $n$ so that $f^n(\gamma)$ is
contained in $\cup B_i$, giving that $f^n(\gamma)$ has trivial homology class
in $H_1(W^s(\zeta))$.  In particular, $lk(f^n(\gamma),T) = 0 \ (\text{mod} \
1)$, so that for any $\Gamma$ with $\partial \Gamma = \gamma$ we have
$\left<f^n(\Gamma),T \right> = k$ for some integer $k$.

Recall that $f^* T = d T$, where $d$ is the algebraic degree of $f$.
Proposition  \ref{PROP:PAIRING_INV} gives that $k = \left<f^n(\Gamma),T \right> =
\left<\Gamma,(f^*)^n T\right> = d^n \left<\Gamma,T\right>$.  In particular,
$lk(\gamma,T) \equiv k/d^n \ (\text{mod} \ 1)$.
\end{proof}

Using Proposition \ref{PROP:RATIONAL_LINKING}, Theorem \ref{THM:GENERAL_TECHNIQUE}
presents a general strategy for showing that $H_1(U(f))$ is infinitely generated.

\vspace{0.1in}
\begin{proof}[Proof of Theorem \ref{THM:GENERAL_TECHNIQUE}:]
Since $\Omega$ is a union of basins of attraction for attracting periodic points of $f$,
Proposition \ref{PROP:RATIONAL_LINKING} gives that $lk(\cdot,T): H_1(\Omega) \rightarrow \QQ/\ZZ$.
There are homology classes $c \in  H_1(\Omega)$ with $lk(c,T) \neq 0$ arbitrarily close to
zero, so, since $lk(\cdot,T)$ is a homomorphism, the image of $lk(\cdot,T): H_1(\Omega) \rightarrow \QQ/\ZZ$ is dense
in $\QQ/\ZZ$.  Because any dense subgroup of $\QQ/\ZZ$ is infinitely generated,
the image of $lk(\cdot,T)$ is infinitely generated, hence $H_1(\Omega)$ is,
as well.
\end{proof}

\begin{example}\label{EXAMPLE:DEFN_LINK}
Consider the polynomial skew product $(z,w) \mapsto (z^2,w^2+0.3z)$, for which
the Fatou set consists of the union of basins of attraction for three
super-attracting fixed points: $[0:1:0]$, $[0:0:1]$, and $[1:0:0]$.  In Figure
\ref{FIG_LINK_IN_LINE} we show a computer generated image of the intersection
of $W^s([0:1:0])$ (lighter grey) and $W^s([0:0:1])$ (dark grey) with the
vertical line $z=z_0=0.99999$.  In terms of the fiber-wise Julia sets that were
mentioned in the introduction, $K_{z_0}$ is precisely the closure of the dark
grey region and $J_{z_0}$ is its boundary.

We will see in Proposition \ref{PROP:HAMONIC_MEASURE_ON_VERTICALS} that $T
\vert_{z=z_0}$ is precisely the harmonic measure on $K_{z_0}$.  Using this
knowledge, and supposing that the computer image is accurate, we illustrate
how the above definitions can be used to show that the smooth loop $\gamma$ shown in
the figure represents a non-trivial homology class in $H_1(W^s([0:1:0]))$.  

Suppose that we use the  two chain $\Gamma_1$ that is depicted in the figure to
compute $lk(\gamma,T)$.  The harmonic measure on $K_{z_0}$ is supported in $J_{z_0}$
and equally distributed between the four symmetric pieces
with total measure of $K_{z_0}$ is $1$.  Therefore (using Corollary
\ref{COR:RESTRICT_THEN_INTEGRATE}) we see that $lk(\gamma,T) = \int_{\Gamma_1} T
\vert_{z=z_0} = \frac{1}{4} \ (\text{mod} \ 1)$, because $\Gamma_1$ covers
exactly $1$ these $4$ pieces of $K_{z_0}.$

If instead we use $\Gamma_2$, the disc ``outside of $\gamma$'' within the projective
line $z=z_0$ with the orientation chosen so that $\partial \Gamma_2 = c$ as
depicted, then $lk(\gamma,T) = \int_{\Gamma_2} T \vert_{z=z_0} = -\frac{3}{4} \
(\text{mod} \ 1)$ (because $\Gamma_2$ covers $3$ of the $4$ symmetric pieces of
$K_{z_0},$ but with the opposite orientation than that of $\Gamma_1$).
However, $-\frac{3}{4} \ (\text{mod} \ 1) = \frac{1}{4} \ (\text{mod} \ 1)$, so
we see that the computed linking number does come out the same.

Since $lk(\gamma,T) \neq 0 \ (\text{mod} \ 1)$, Corollary \ref{COR:NONZERO_LINK}
gives that it is impossible to have any $2$-chain $\Lambda$ within
$W^s([0:1:0])$ (even outside of the vertical line $z=z_0$) so that $\partial
\Lambda = c$.  Thus $[\gamma] \neq 0 \in H_1(W^s([0:1:0]))$.

\begin{figure}
\begin{picture}(0,0)%
\epsfig{file=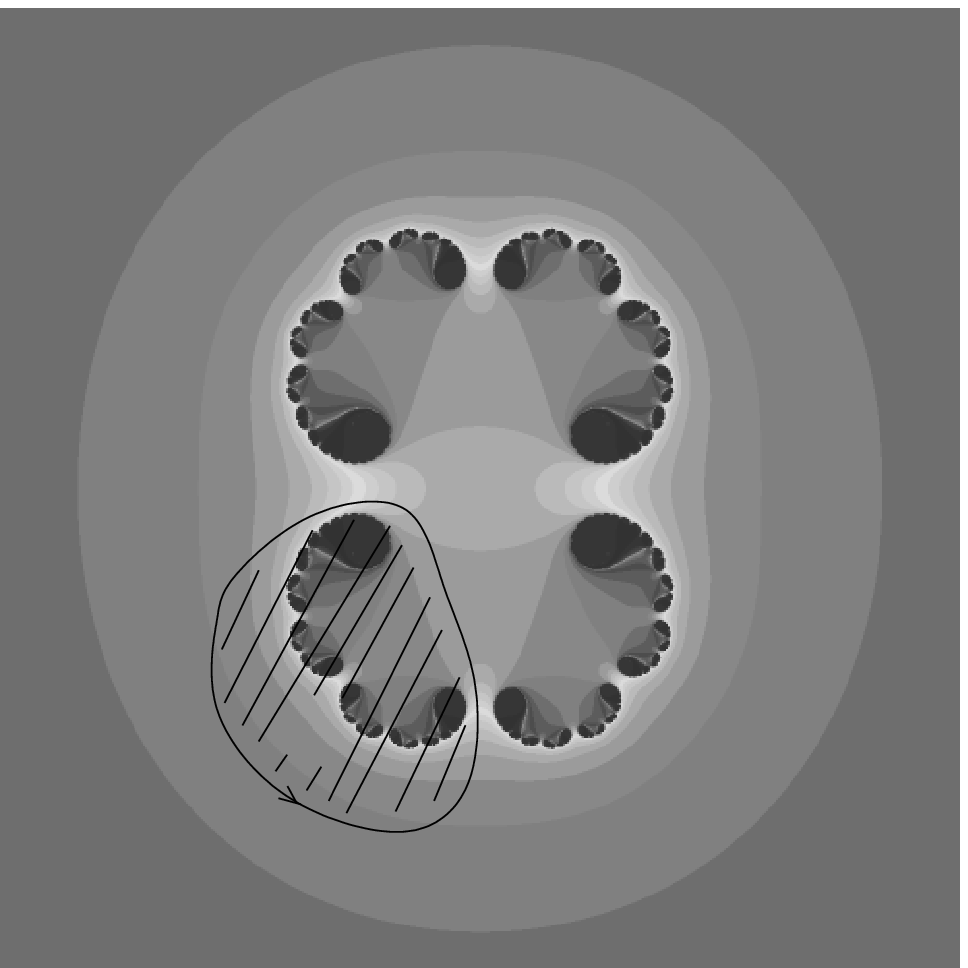}%
\end{picture}%
\setlength{\unitlength}{3947sp}%
\begingroup\makeatletter\ifx\SetFigFont\undefined%
\gdef\SetFigFont#1#2#3#4#5{%
  \reset@font\fontsize{#1}{#2pt}%
  \fontfamily{#3}\fontseries{#4}\fontshape{#5}%
  \selectfont}%
\fi\endgroup%
\begin{picture}(4607,4607)(1201,-4968)
\put(1801,-2536){\makebox(0,0)[lb]{\smash{{\SetFigFont{12}{14.4}{\familydefault}{\mddefault}{\updefault}{\color[rgb]{0,0,0}$\Gamma_2$}%
}}}}
\put(2026,-3811){\makebox(0,0)[lb]{\smash{{\SetFigFont{12}{14.4}{\familydefault}{\mddefault}{\updefault}{\color[rgb]{0,0,0}$\gamma$}%
}}}}
\put(2546,-3912){\makebox(0,0)[lb]{\smash{{\SetFigFont{12}{14.4}{\familydefault}{\mddefault}{\updefault}{\color[rgb]{0,0,0}$\Gamma_1$}%
}}}}
\end{picture}%
\caption{\label{FIG_LINK_IN_LINE} Both choices $\Gamma_1$ (inside of $\gamma$) and $\Gamma_2$ (outside of $\gamma$) yield the same $lk(\gamma,T)$.}
\end{figure}

\end{example}

\section{Application to Polynomial Endomorphisms of $\Pt$}
\label{SEC:ENDO}

Having developed the linking numbers in Section \ref{SEC:LINKING}, Theorem \ref{THM:JPI_DISCONN} will be a  consequence of the following
well-known result:
\begin{thm}\label{THM:J_DISCONN_RS}\cite[Thm. 5.7.1]{BEAR}
Let $g:\Po \rightarrow \Po$ be a rational map.  Then, if
$J(g)$ is disconnected, it contains uncountably many components, and each point
of $J(g)$ is an accumulation point of infinitely many distinct components of
$J(g)$.
\end{thm}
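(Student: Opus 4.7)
The plan is to combine density of backward orbits of $g$ with the observation that $g$ sends each connected component of $J(g)$ into a single clopen piece of $J(g)$, and then upgrade ``infinitely many'' to ``uncountably many'' via the Brouwer characterization of Cantor sets.

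Since $J(g)$ is disconnected, I would write $J(g) = A \sqcup B$ with $A, B$ non-empty and clopen in $J(g)$, and separate them by disjoint open neighborhoods $U \supset A$ and $V \supset B$ in $\Po$. Both $A$ and $B$ are infinite (as clopen pieces of the perfect set $J(g)$), so I may pick $a \in A$ and $b \in B$ outside the exceptional set $E$ of $g$, which contains at most two points.

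Given any $z_0 \in J(g)$ and any neighborhood $W$ of $z_0$ in $\Po$, the standard fact that $g^n(W)$ eventually contains $J(g)$ minus any prescribed neighborhood of $E$ (a consequence of Montel's theorem applied to inverse branches) yields, for all sufficiently large $n$, points $w_a, w_b \in W$ with $g^n(w_a) = a$ and $g^n(w_b) = b$. For any component $C$ of $J(g)$, the image $g^n(C)$ is connected and contained in $J(g) = A \sqcup B$, hence lies entirely in one of the two clopen pieces. Therefore $g^n(C(w_a)) \subset A$ and $g^n(C(w_b)) \subset B$, which forces the components $C(w_a)$ and $C(w_b)$ of $J(g)$ to be distinct, and at least one of them to differ from the component $C(z_0)$. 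Iterating: given distinct components $D_1, \dots, D_k$ accumulating at $z_0$ and all different from $C(z_0)$, I shrink $W$ to avoid the closed set $D_1 \cup \cdots \cup D_k$ (which does not contain $z_0$) and repeat the construction to produce $D_{k+1}$. This exhibits infinitely many distinct components of $J(g)$ accumulating at every $z_0 \in J(g)$.

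To pass from ``infinitely many'' to ``uncountably many'', I would consider the quotient $X := J(g)/\!\sim$ by the same-component equivalence; this is a compact Hausdorff totally disconnected space, and the accumulation statement just established shows that $X$ has no isolated points, so $X$ is a Cantor space by Brouwer's characterization and is in particular uncountable. The main obstacle is the ``eventual surjectivity'' input, that $g^n(W)$ eventually covers $J(g) \setminus E$ for any open $W$ meeting $J(g)$; I would derive it from Montel's theorem by the usual inverse-branches argument, after which everything reduces to elementary topology of clopen decompositions plus the Brouwer characterization of Cantor sets.
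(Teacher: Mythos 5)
You should know that the paper does not prove this statement at all: it is imported verbatim from Beardon's book (Theorem 5.7.1 of \emph{Iteration of Rational Functions}), so there is no in-paper argument to compare against. Your proof is correct, and it is essentially the classical argument behind Beardon's theorem. The two dynamical inputs you isolate are exactly the right ones: complete invariance of $J(g)$ plus connectivity of continuous images forces $g^n$ to send each component of $J(g)$ entirely into one piece of any clopen partition $J(g)=A\sqcup B$, while the blowing-up property (for any open $W$ meeting $J(g)$ one has $g^n(W)\supset J(g)$ for all large $n$; a standard consequence of Montel's theorem) plants preimages of both $a\in A$ and $b\in B$ in every neighborhood of every $z_0\in J(g)$, and your inductive shrinking then produces infinitely many distinct components near $z_0$ (do also require the successive neighborhoods $W_k$ to have diameters tending to $0$, so the components $D_k$ genuinely accumulate at $z_0$ rather than merely meeting the first $W$). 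Two small remarks. First, the statement implicitly requires $\deg g\ge 2$, as the paper assumes throughout: you use perfectness of $J(g)$, the exceptional set, and the blowing-up property, none of which hold for M\"obius maps. Second, the uncountability step can be streamlined: the component space $X=J(g)/\!\sim$ is a nonempty compact Hausdorff space (in a compact Hausdorff space components coincide with quasi-components, so distinct classes are separated by clopen saturated sets) with no isolated points by the accumulation statement, and any such space is uncountable by the usual Cantor/Baire argument; the metrizability and total disconnectedness needed to invoke Brouwer's characterization are true but superfluous here.
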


Let us begin by studying the Fatou set of one-dimensional maps:
\begin{prop}\label{PROP:1VAR_FATOU}
If $g:\Po \rightarrow \Po$ is a hyperbolic rational map with disconnected Julia set $J(g)$, then the Fatou set $U(g)$ has infinitely generated first homology.
\end{prop}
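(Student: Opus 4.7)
The plan is to apply Theorem \ref{THM:GENERAL_TECHNIQUE} with $k=1$.  By Remark \ref{RMK:GREEN_1D} the Green's current $T$ is the measure of maximal entropy $\mu_g$ on $\Po$, supported on $J(g)$; hyperbolicity guarantees that $U(g)$ is precisely the union of the basins of finitely many attracting periodic cycles, so the hypothesis of Theorem \ref{THM:GENERAL_TECHNIQUE} reduces to producing piecewise smooth loops $\gamma \subset U(g)$ with $lk(\gamma,T)$ nonzero in $\QQ/\ZZ$ and arbitrarily close to $0$.

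First I would produce a single loop with nonzero linking.  Since $J(g)$ is disconnected, write $J(g) = A \sqcup B$ as a disjoint union of nonempty closed sets, so that $d(A,B) > 0$.  Because $g$ is hyperbolic, the postcritical set $P(g) = \overline{\bigcup_{n \ge 1} g^n(\mathrm{crit}(g))}$ is a compact subset of $U(g)$, and hence $d(J(g), P(g)) > 0$.  A standard separation argument in $\Po$ (for example, replacing $A$ by a thin polyhedral neighborhood chosen so as to avoid $B \cup P(g)$) produces a piecewise smooth Jordan curve $\gamma_0 \subset U(g)$ bounding a topological disc $D_0$ with $A \subset D_0$ and $\overline{D_0} \cap (B \cup P(g)) = \emptyset$.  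By Corollary \ref{COR:RESTRICT_THEN_INTEGRATE},
\[
\lambda_0 := lk(\gamma_0, T) \equiv \int_{D_0} T = \mu_g(A) \pmod{1},
\]
and since $\mu_g$ has full support on $J(g)$ with $A, B$ both nonempty and relatively open in $J(g)$, we have $\mu_g(A), \mu_g(B) > 0$ with $\mu_g(A)+\mu_g(B)=1$, so $\lambda_0 \in (0,1)$ is nonzero in $\RR/\ZZ$.

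Next I would drive the linking number to zero by taking univalent inverse branches.  Since $\overline{D_0}$ is simply connected and disjoint from $P(g)$, for every $n \ge 1$ the map $g^n$ admits exactly $d^n$ univalent inverse branches on $D_0$; let $\{D_{n,j}\}_{j=1}^{d^n}$ denote the resulting topological discs and set $\gamma_{n,j} := \partial D_{n,j}$.  Each $\gamma_{n,j}$ lies in $U(g)$ by total invariance.  Since $g^n|_{D_{n,j}}: D_{n,j} \to D_0$ is a biholomorphism, Proposition \ref{PROP:PAIRING_INV} together with the invariance $(g^n)^* T = d^n T$ yields
\[
\mu_g(D_0) = \langle D_0, T \rangle = \langle g^n_* D_{n,j}, T \rangle = \langle D_{n,j}, (g^n)^* T \rangle = d^n \langle D_{n,j}, T \rangle,
\]
so $lk(\gamma_{n,j}, T) \equiv \lambda_0 / d^n \pmod{1}$.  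This is a nonzero class in $\QQ/\ZZ$ tending to $0$, and Theorem \ref{THM:GENERAL_TECHNIQUE} then implies that $H_1(U(g))$ is infinitely generated.

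The step I expect to be the most delicate is the construction of $\gamma_0$ with both the separation property and the avoidance of the postcritical set.  The positive distances $d(A,B)>0$ and $d(J(g),P(g))>0$ (the latter being exactly the content of hyperbolicity in this setting) make the construction routine, but it is essential that we avoid $P(g)$ on the entire disc $D_0$ rather than merely near $A$, so that all of $g^n$'s inverse branches on $D_0$ are simultaneously univalent.  Once this is arranged, the remainder of the argument is a direct invariance computation.
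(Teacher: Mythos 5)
Your high-level strategy (apply Theorem \ref{THM:GENERAL_TECHNIQUE}, identify $T$ with $\mu_g$, find one loop with nonzero linking, then drive linking to $0$ by pulling back) is the natural first idea, but your construction of $D_0$ has a genuine gap, and the paper's proof deliberately avoids backward iteration entirely for exactly this reason.

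The gap is in the claim that a ``standard separation argument'' produces, for an arbitrary decomposition $J(g)=A\sqcup B$ into nonempty disjoint closed sets, a single Jordan curve $\gamma_0\subset U(g)$ bounding a topological disc $D_0$ with $A\subset D_0$ and $\overline{D_0}\cap(B\cup P(g))=\emptyset$. A thin polyhedral neighborhood of $A$ avoiding $B\cup P(g)$ does exist, but its components are typically not simply connected, and after filling the holes you may swallow $B$ or $P(g)$. The McMullen examples $z\mapsto z^n+\lambda/z^h$ mentioned in Remark \ref{EG:ONE_VARIABLE_EXAMPLES} give a concrete failure: $J(g)$ is a Cantor set of nested essential circles, so each circle (and each clopen piece built from circles) has a hole. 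If you take $A$ to be the ``outer'' half of the circles and $B$ the ``inner'' half, any Jordan disc containing $A$ must contain, on one side of $\gamma_0$, either all of $B$ or all of the basin of $\infty$, and the latter contains $P(g)$. So no admissible $D_0$ exists for that choice of $A,B$. The construction can be saved in this particular example by choosing $A$ instead to be the inner half (here $P(g)$ happens to live only in the basin of $\infty$), but your proof never makes such a choice, and it is not apparent that a good choice always exists for a general hyperbolic rational map with disconnected Julia set. This is not a cosmetic issue: the entire univalent-branch computation $lk(\gamma_{n,j},T)\equiv\lambda_0/d^n$ hinges on $\overline{D_0}$ being simultaneously simply connected and disjoint from $P(g)$.

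Two ways to close the gap are suggested by the paper itself. The paper's own proof of Proposition \ref{PROP:1VAR_FATOU} sidesteps preimages altogether: it invokes Beardon's theorem (Theorem \ref{THM:J_DISCONN_RS}) that $J(g)$ has uncountably many components with every point an accumulation point of infinitely many of them, and then builds a sequence of $2$-chains $\Gamma_n$ for which $0<\left<\Gamma_1,\mu_g\right><\left<\Gamma_2,\mu_g\right><\cdots<1$; the classes $[\partial\Gamma_n-\partial\Gamma_{n-1}]$ have nonzero linking numbers tending to $0$ because the increasing pairings are bounded by $1$. Alternatively, if you insist on a pullback argument, you should not ask $D_0$ to avoid $P(g)$; instead bound the number of critical values inside and apply Riemann--Hurwitz, exactly as in the paper's proof of Theorem \ref{THM:MAIN}, where a disc with at most $d-2$ critical values necessarily has at least two disc preimages, each mapping with degree $\le d-1$, giving $\left<\Gamma_n,T\right>\leq\left(\frac{d-1}{d}\right)^n$. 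Either repair is substantial, so your argument as written does not establish the proposition.
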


\begin{rmk}\label{EG:ONE_VARIABLE_EXAMPLES}
When reading the proof of
Proposition \ref{PROP:1VAR_FATOU}, it is helpful to keep in mind two examples.  
The first is the polynomial $r(z) = z^3-0.48z+(0.706260+0.502896i)$ for
which one of the critical points escapes to infinity, while the other is in the
basin of attraction for a cycle of period $3$.  The result is a filled Julia
set with infinitely many non-trivial connected components, each of which is
homeomorphic to the Douady's rabbit.  (See \cite{MILNOR}.)

The second example are maps of the form $f(z) = z^n + \lambda/z^h$, which were
considered in \cite{MCMULLEN}.  For suitable $n, h$, and $\lambda$ the Julia set
is a Cantor set of nested simple closed curves.
\end{rmk}

\begin{proof}[Proof of Proposition \ref{PROP:1VAR_FATOU}:]
Since $g$ is hyperbolic, $U(g)$ consists of the basins of attraction of
finitely many attracting periodic points.  Therefore, according to Theorem
\ref{THM:GENERAL_TECHNIQUE}, it is sufficient to find elements of $H_1(U(g))$
having non-zero linking numbers with $T=\mu_g$ that are arbitrarily close to $0$.

Theorem \ref{THM:J_DISCONN_RS} will allow us to find a sequence of piecewise
smooth two chains $\Gamma_1, \Gamma_2,\ldots$ so that $0 <
\left<\Gamma_{n-1},\mu_G\right> < \left<\Gamma_{n},\mu_G\right> < 1$ and
$\partial \Gamma_n \subset U(g)$, as follows.  

For each $n$, $\Gamma_n$ will be a union of disjoint positively-oriented closed
discs in $\Po$, each counted with weight one.  Since $J(g)$ is disconnected, we
can find a piecewise smooth oriented loop $\gamma_1 \subset U(g)$ that
separates $J(g)$.  Let $\Gamma_1$ be the positively-oriented disc in $\Po$
having $\gamma_1$ as its oriented boundary.  Since $\mu_g$ is normalized and
$\gamma_1$ separates $J(g) = \supp(\mu_g)$, we have $0 <
\left<\Gamma_1,\mu_g\right> < 1$.  Now suppose that
$\Gamma_1,\ldots,\Gamma_{n-1}$ have been chosen.  Since
$\left<\Gamma_{n-1},\mu_g\right> < 1$, we have $J(g) \cap (\Po \setminus
\Gamma_{n-1}) \neq \emptyset$.  Then, according to Theorem
\ref{THM:J_DISCONN_RS}, there is more than one component of $J(g) \cap (\Po
\setminus \Gamma_{n-1})$, so we can choose an oriented loop $\gamma_n \subset
U(g) \cap (\Po \setminus \Gamma_{n-1})$ so that at least one component of $J(g)
\cap (\Po \setminus \Gamma_{n-1})$ is on each side of $\gamma_n$.  Then, we let
$\Gamma_n$ be the union of oriented discs in $\Po$ consisting of the points
inside of $\gamma_n$ and any discs from $\Gamma_{n-1}$ that are not inside of
$\gamma_n$.

Considering the homology class $[\partial \Gamma_n - \partial \Gamma_{n-1}] \in H_1(U(g))$ we have that
\begin{eqnarray*}
lk([\partial \Gamma_n - \partial \Gamma_{n-1}],\mu_g) = \left<\Gamma_{n},\mu_G\right> - \left<\Gamma_{n-1},\mu_G\right>  \ (\text{mod} \ 1)
\end{eqnarray*}
\noindent
is non-zero for each $n$.  However, since 
\begin{eqnarray*}
\sum_n \left<\Gamma_{n},\mu_G\right> - \left<\Gamma_{n-1},\mu_G\right>
\end{eqnarray*}
\noindent
is bounded by $1$, we have that $lk([\partial \Gamma_n - \partial
\Gamma_{n-1}],\mu_g) \rightarrow 0$ in $\QQ/\ZZ$.  Theorem
\ref{THM:GENERAL_TECHNIQUE} then gives that $H_1(U(g))$ is infinitely
generated. 
\end{proof}

Let $f:\Pt \rightarrow \Pt$ be a polynomial endomorphism given 
in projective coordinates by
\begin{eqnarray}\label{EQN:LIFT_F}
f([Z:W:T]) = [P(Z,W,T):Q(Z,W,T):T^d].
\end{eqnarray}
\noindent
Since $f:\Pt \rightarrow \Pt$ is assumed globally holomorphic, $P(Z,W,T), Q(Z,W,T),$ and $T^d$ have no common zeros other than $(0,0,0)$.

The (projective) line at infinity $\Pi:=\{T=0\}$ is uniformly
super-attracting and the restriction $f_\Pi$
is given in homogeneous coordinates by 
\begin{eqnarray}\label{EQN:LIFT_F_PI}
f_\Pi:([Z:W]) \rightarrow [P_0(Z,W):Q_0(Z,W)].
\end{eqnarray}
\noindent
where $P_0 := P(Z,W,0)$ and $Q_0 :=Q(Z,W,0)$.

Let $U(f)$ be the Fatou set for $f$ and $U({f_\Pi})$ the Fatou set for $f_\Pi$.  The former is an open set in $\Pt$, while
the latter is an open set in the line at infinity $\Pi$.

\begin{lem}\label{LEM:FAT_PI}If $f_\Pi$ is hyperbolic then
$U({f_\Pi}) \subset U(f)$.
\end{lem}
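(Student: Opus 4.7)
The plan is to show that every point of $U(f_\Pi)$ lies in a basin of attraction for $f$ viewed as a periodic cycle in $\Pt$, and hence in $U(f)$. Since $f_\Pi$ is hyperbolic, $U(f_\Pi)$ decomposes as a union of basins $W^s_{f_\Pi}(\zeta_i)$ for finitely many attracting periodic cycles $\zeta_i \subset \Pi$. Thus it suffices to fix one such cycle $\zeta \subset \Pi$ and show that $W^s_{f_\Pi}(\zeta) \subset U(f)$.

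The key local observation is that $\zeta$, viewed as a periodic cycle of $f$ in $\Pt$, is attracting. To see this, pick any point $[Z_0:W_0:0] \in \zeta$; after relabeling assume $W_0 \neq 0$, and use affine coordinates $(z,t) = (Z/W, T/W)$ on the chart $W \neq 0$. A direct computation from \eqref{EQN:LIFT_F} shows that in these coordinates $f$ has the form
\begin{eqnarray*}
(z,t) \longmapsto \left( \frac{P(z,1,t)}{Q(z,1,t)}, \frac{t^d}{Q(z,1,t)} \right),
\end{eqnarray*}
which is well-defined near $(z_0, 0)$ since $Q_0(Z_0,W_0)\neq 0$ (else the orbit would land on indeterminacy, contradicting global holomorphy of $f$). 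On the slice $t=0$ this restricts to $f_\Pi$ in the chart, while the second coordinate vanishes to order $d \geq 2$ in $t$. Consequently the Jacobian of $f$ at $(z_0, 0)$ is upper triangular with diagonal entries $f_\Pi'(z_0)$ and $0$. Iterating around the cycle, the multipliers of $\zeta$ as a periodic cycle of $f$ in $\Pt$ are the multiplier of $\zeta$ for $f_\Pi$ (which has modulus $<1$ by hyperbolicity) and $0$. Hence $\zeta$ is attracting in $\Pt$.

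Standard local theory then gives that the global basin
\begin{eqnarray*}
W^s_f(\zeta) = \{ q \in \Pt : f^n(q) \to \zeta \text{ as } n \to \infty\}
\end{eqnarray*}
is an open subset of $\Pt$ on which $\{f^n\}$ forms a normal family, so $W^s_f(\zeta) \subset U(f)$. Finally, since $\Pi$ is totally invariant, the forward orbit of any $p \in W^s_{f_\Pi}(\zeta) \subset \Pi$ under $f$ agrees with its forward orbit under $f_\Pi$ and therefore converges to $\zeta$; thus $p \in W^s_f(\zeta) \subset U(f)$. Taking the union over all attracting cycles $\zeta_i$ of $f_\Pi$ yields $U(f_\Pi) \subset U(f)$.

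The one step requiring care is the local coordinate computation showing the transverse multiplier is $0$; once this is in hand, the rest is a routine application of hyperbolicity of $f_\Pi$ together with the elementary fact that an attracting periodic cycle in $\Pt$ has an open basin of attraction contained in $U(f)$.
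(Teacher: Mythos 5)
Your argument follows the same route as the paper's: decompose $U(f_\Pi)$ into finitely many basins of attraction for attracting cycles of $f_\Pi$ (using hyperbolicity), observe that each such cycle is attracting for $f$ in $\Pt$ because $\Pi$ is transversally superattracting, and conclude that each $W^s_{f_\Pi}(\zeta_i)$ sits inside the corresponding $W^s_f(\zeta_i) \subset U(f)$. The paper simply cites transversal superattraction as a known feature of polynomial endomorphisms; you instead derive it by a local coordinate computation, which is a welcome amount of extra detail.

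One justification in that computation is wrong, however. You claim $Q_0(Z_0,W_0)\neq 0$ ``else the orbit would land on indeterminacy.'' That is not so: if $Q_0(Z_0,W_0)=0$ but $P_0(Z_0,W_0)\neq 0$, the image is the perfectly well-defined point $[1:0:0]$, and since we are on $\Pi$ where $T=0$, indeterminacy would require $P_0=Q_0=0$ simultaneously. What actually fails when $Q_0(Z_0,W_0)=0$ is merely that the image leaves the chart $W\neq 0$, so your formula for $f$ in those coordinates is not valid near $(z_0,0)$. The fix is routine: precompose with a projective change of coordinates in $[Z:W]$ so that no point of the finite cycle $\zeta$ lies at $[1:0:0]$ (or compute the cycle multiplier by composing local expressions chart by chart around the cycle). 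With that repair, the upper-triangular Jacobian computation goes through, the transverse multiplier is $0$, and the rest of your proof is correct.
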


\begin{proof}
Since $f_\Pi$ is hyperbolic, $U({f_\Pi})$ is in the union of the basins of
attraction $W^s_\Pi(\zeta_i)$ of a finite number of periodic attracting points
$\zeta_1,\ldots,\zeta_k$.  The line at infinity $\Pi$ is transversally
superattracting, so each $\zeta_i$ is superattracting in the
transverse direction to $\Pi$ and (at least) geometrically attracting within
$\Pi$.  Let $W^s(\zeta_i) \subset \Pt$ be the basin of attraction for $\zeta_i$ under $f$.  Then, $W^s_\Pi(\zeta_i) \subset W^s(\zeta_i)$,
giving $U({f_\Pi}) \subset U(f)$.
\end{proof}

Let $T$ be the Green's current for $f$ and let $\mu_\Pi$ be the measure of maximal entropy for the restriction $f_{|\Pi}$.

\begin{lem}\label{LEM:PI_RESTRICTION}
The restriction $T_{| \Pi}$ coincides with $\mu_\Pi$.
\end{lem}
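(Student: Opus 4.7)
The plan is to compute both currents in a common local chart via their plurisubharmonic potentials, then observe that the restriction of the potential for $T$ to $\Pi$ coincides with the potential for $\mu_\Pi$. The key input is the fact that the lift $F$ preserves the hyperplane $\{T=0\}$ and that its restriction there is exactly a lift of $f_{|\Pi}$.

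First, I would unpack the lift. Writing $F(Z,W,T) = (P(Z,W,T), Q(Z,W,T), T^d)$ as in (\ref{EQN:LIFT_F}), and setting $P_0(Z,W) = P(Z,W,0)$, $Q_0(Z,W) = Q(Z,W,0)$ as in (\ref{EQN:LIFT_F_PI}), we have $F(Z,W,0) = (P_0(Z,W), Q_0(Z,W), 0)$. Let $F_\Pi : \CC^2 \to \CC^2$ denote the homogeneous degree $d$ polynomial map $(Z,W) \mapsto (P_0(Z,W), Q_0(Z,W))$; then $F_\Pi$ is a lift of $f_{|\Pi}$ to $\CC^2$, and by induction $F^n(Z,W,0) = (F_\Pi^n(Z,W), 0)$. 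Consequently, the Green's function $G$ of $F$ from (\ref{EQN:GREEN1}) satisfies
\begin{eqnarray*}
G(Z,W,0) = \lim_{n\to\infty} \frac{1}{d^n}\log\|(F_\Pi^n(Z,W),0)\| = \lim_{n\to\infty}\frac{1}{d^n}\log\|F_\Pi^n(Z,W)\| = G_\Pi(Z,W),
\end{eqnarray*}
where $G_\Pi$ is the Green's function associated to the one-variable rational map $f_{|\Pi}$ (and the two norms agree because we can take the same norm on $\CC^2 \subset \CC^3$).

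Next, I would compute $T_{|\Pi}$ using a convenient local section. In the chart $\{W \neq 0\}$ on $\Pt$, with affine coordinates $(z,t) = (Z/W, T/W)$, the map $\sigma(z,t) = (z,1,t)$ is a holomorphic section of $\pi$, so $T = \frac{1}{2\pi} dd^c G(z,1,t)$ in this chart. The inclusion $\iota : \Pi \hookrightarrow \Pt$ is given in these coordinates by $z \mapsto (z,0)$, and by the definition of $\iota^* T$ via pullback of potentials (recalled in Subsection \ref{SUBSECTION:INVARIANCE_AND_RESTRICTION}),
\begin{eqnarray*}
T_{|\Pi} = \iota^* T = \frac{1}{2\pi}\, dd^c\bigl(G(z,1,0)\bigr) = \frac{1}{2\pi}\, dd^c G_\Pi(z,1).
\end{eqnarray*}
Pullback is well-defined here because $G$ is locally bounded off the origin, so $\Pi$ is not contained in the polar locus of $T$. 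The right-hand side is precisely the local potential expression for the Green's current of $f_{|\Pi}$ at the affine chart $z \mapsto [z:1]$; by Remark \ref{RMK:GREEN_1D}, this current is $\mu_\Pi$. Covering $\Pi$ by the analogous chart centered at $\{Z \neq 0\}$ gives the same identification, so $T_{|\Pi} = \mu_\Pi$ globally on $\Pi$.

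The only delicate point is justifying that $\iota^* T$ can be computed as the $dd^c$ of the restricted potential; this is standard (see \cite[Appendix A.7]{SI1}) once we know $G$ is locally bounded near $\Pi$, which follows from Hölder continuity of $G$ on $\CC^3 \setminus \{0\}$. Beyond this, the argument is essentially a bookkeeping exercise comparing the two Green's functions on the invariant hyperplane.
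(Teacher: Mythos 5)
Your proof is correct and follows essentially the same route as the paper's: both reduce the claim to the identity $G(Z,W,0) = G_\Pi(Z,W)$, which follows from $F(Z,W,0) = (F_\Pi(Z,W),0)$, and then compare the potentials of the two currents. The only cosmetic difference is that the paper works directly with $\pi^*$ on $\pi^{-1}(\Pi)$, whereas you pass through an explicit local section and affine chart before invoking Remark \ref{RMK:GREEN_1D}; both are valid bookkeeping for the same argument.
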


\begin{proof}
Consider the lift $F_\Pi:\CC^2 \rightarrow \CC^2$ of the rational map $f_\Pi :\Po \rightarrow \Po$.  As observed in Remark \ref{RMK:GREEN_1D}, 
\begin{eqnarray*}
G_\Pi(Z,W) = \lim \frac{1}{d^n} \log ||F_\Pi^n(Z,W)||
\end{eqnarray*}
is the potential for $\mu_\Pi$, meaning that $\pi^* \mu_\Pi = \frac{1}{2\pi} dd^c G_\Pi$.

The restriction $T_{| \Pi}$ is obtained by restricting of the potential $G$ to $\pi^{-1}(\Pi) = \{(Z,W,0) \in \CC^3\}$.  Specifically,
it is defined by $\pi^* (T_{|\Pi}) = \frac{1}{2\pi} dd^c(G(Z,W,0))$.  Therefore, it suffices to show that $G(Z,W,0) = G_\Pi(Z,W)$.
However, this follows directly from the fact that $F(Z,W,0) = F_\Pi(Z,W)$.  (Here $F$ is the lift of
$f$ to $\CC^3$, as given by (\ref{EQN:LIFT_F}) when considered in non-projective coordinates $[Z,W,T]$.)
\end{proof}

\vspace{0.1in}
\begin{proof}[Proof of Theorem \ref{THM:JPI_DISCONN}]
As in the proof of Proposition \ref{PROP:1VAR_FATOU}, we can find a sequence of
$1$-cycles $c_n$ in $U(f_\Pi)$ having linking numbers with $\mu_\Pi$
arbitrarily close to $0$ in $\QQ/\ZZ$.  Since $f_{|\Pi}$ is hyperbolic, Lemma
\ref{LEM:FAT_PI} gives that each $c_n$ is in the union of basins of attraction
for finitely many attracting periodic points of $f$.  In particular,
$lk(c_i,T)$ is well-defined for each $n$.  Lemma
\ref{LEM:PI_RESTRICTION} gives that $T_{|\Pi} = \mu_\Pi$, so that 
$lk(c_n,T)$ (considering $c_n$ in $\Pt$) coincides with $lk(c_n,\mu_\Pi)$
(considering $c_n$ in the projective line $\Pi$).  Therefore, $lk(c_n,T) \neq 0$
and $lk(c_n,T) \rightarrow 0$ in $\QQ/\ZZ$.  Theorem
\ref{THM:GENERAL_TECHNIQUE} gives that the union of these basins has infinitely
generated first homology, and hence $U(f)$ does as well.  
\end{proof}

\vspace{0.1in}

\begin{example}\label{EXAMPLE:RABBITS}
We embed the polynomial dynamics of $r(z)$  from Remark
\ref{EG:ONE_VARIABLE_EXAMPLES} as the dynamics on the line at infinity $\Pi$
for a polynomial endomorphism of $\Pt$.  Let $R(Z,W) =
Z^3-0.48ZW^2+(0.706260+0.502896i)W^3$ be the homogeneous form of $r$, and let
$P(Z,W,T)$ and $Q(Z,W,T)$ be any homogeneous polynomials of degree $2$.  Then
\begin{eqnarray*}
f([Z:W:T]) = [R(Z,W)+T\cdot P(Z,W,T):W^3+T \cdot Q(Z,W,T):T^3]
\end{eqnarray*}

\noindent
is a polynomial endomorphism with $f_\Pi = r$.  In this case, Theorem \ref{THM:JPI_DISCONN} gives that the basin of attraction of $[1:0:0]$ for $f$ has infinitely generated first homology.
\end{example}

\begin{rmk}
Suppose that $f: \Pk \rightarrow \Pk$ is a holomorphic endomorphism having an
invariant projective line $\Pi$.  Lemma \ref{LEM:PI_RESTRICTION} can be
extended to give that $T_{|\Pi} = \mu_\Pi$, where $\mu_\Pi$ is the measure of
maximal entropy for the one-dimensional map $f_{|\Pi}$.  If $\Pi$ is at least
geometrically attracting transversally, $f_{|\Pi}$ is hyperbolic, and
$J(f_{|\Pi})$ is disconnected, then essentially the same proof as that of
Theorem \ref{THM:JPI_DISCONN} gives that the Fatou set $U(f)$ has infinitely
generated first homology.

Using this observation, one can inductively create sequences of polynomial
endomorphisms $f_k: \Pk \rightarrow \Pk$, for every $k$, each having Fatou
set with infinitely generated first homology.  One begins with a hyperbolic
polynomial endomorphism $f_1$ of the Riemann sphere $\mathbb{P}^1$ having
disconnected Julia set.   Then, for each $k$, one can let $f_{k}:\Pk
\rightarrow \Pk$ be any polynomial endomorphism whose dynamics on the
hypersurface $\mathbb{P}^{k-1}$ at infinity is given by $f_{k-1}$. (When $k=2$,
the construction of $f_2:\Pt \rightarrow \Pt$ is similar to that from Example
\ref{EXAMPLE:RABBITS}.) The resulting maps each have a totally-invariant
projective line $\Pi$ that is transversally superattracting with ${f_k}_{|\Pi}
= f_1$ hyperbolic with disconnected Julia set.  Thus, the Fatou set $U(f_k)$
has infinitely generated first homology.  
\end{rmk}

\section{Application to Polynomial skew products} 
\label{SEC:SKEW}

A {\em polynomial skew product} is a polynomial endomorphism of the form 
\begin{eqnarray*}
f(z,w) = (p(z),q(z,w))
\end{eqnarray*}
\noindent with $p$ and $q$ polynomials of degree $d$ where $p(z) = z^d +
O(z^{d-1})$ and $q(z) = w^d +O_z(w^{d-1})$.  (See Jonsson \cite{JON_SKEW}.)
Theorem \ref{THM:JPI_DISCONN} can by applied to many polynomial skew products
$f$ to show that that $H_1(U(f))$ is infinitely generated; for example, $f(z,w)
= (z^2,w^2+10z^2)$, which has $J_{\Pi}$ a Cantor set.  Next we will find
alternative sufficient conditions under which a polynomial skew product has
Fatou set with infinitely generated first homology, proving Theorem
\ref{THM:MAIN}.  This theorem will apply to many maps to which Theorem
\ref{THM:JPI_DISCONN} does not apply; for example, $f(z,w) = (z^2,w^2-3z)$, for
which $J_{\Pi}$ is equal to the unit circle.

\vspace{0.1in}

\subsection{Preliminary background on polynomial skew products}
\label{SUBSEC:BACKGROUND_SKEW}


The Green's
current for any polynomial endomorphism can be computed in the affine coordinates on $\CC^2$ as $T :=
\frac{1}{2\pi} dd^{c} G_{\rm affine}$, where $G_{\rm affine}$ is the (affine)
Green's function defined in Remark \ref{RMK:AFFINE_GREENS_FUNCTION}.  The
``base map'' $p(z)$ has a Julia set $J_p \subset \mathbb{C}$ and, similarly, a
Green's function $G_p(z):= \lim_{n \to \infty} \frac{1}{d^n} \log_+
||p^n(z)||$.  Furthermore, one can define a fiber-wise Green's
function\footnote{For the purist: the Green's functions $G_p$ and $G_z$ should
also have the subscript ``affine'', but it is dropped here for ease of
notation.  See Section \ref{SEC:GREENS_CURRENT} for the distinction.} by:
\begin{eqnarray*}
G_z(w) := G_{\rm affine}(z,w) - G_p(z).
\end{eqnarray*}
\noindent 
For each fixed $z$, $G_z(w)$ is a subharmonic function of $w$  and one defines
the fiber-wise Julia sets by $K_z := \{G_z(w) = 0\}$ and $J_z := \partial K_z$.   

The extension of $f$ to $\Pt$ is given by
\begin{eqnarray} \label{EQN:SKEW_HOMOG}
f([Z:W:T]) = [P(Z,T):Q(Z,W,T):T^d],
\end{eqnarray} \noindent
where $P(Z,T)$ and $Q(Z,W,T)$ are the homogeneous versions of $p$ and $q$.
The point\\ $[0:1:0]$ that is ``vertically at infinity'' with respect to the affine coordinates $(z,w)$ is a totally-invariant super-attracting fixed point and $(z,w) \in W^s([0:1:0])$ if and only if
$w \in \CC \setminus K_z$.

Suppose that $(z,w) \in W^s([0:1:0])$ and $(z_n,w_n) := f^n(z,w)$.  Then, 
\begin{eqnarray}
G_{\rm affine}(z,w) &=& \lim \frac{1}{d^n} \log_+ \|f^n(z,w)\|_\infty = \lim \frac{1}{d^n} \log_+ |w_n| \,\, \mbox{and} \\
G_z(w) &=& G_{\rm affine}(z,w) - G_p(z) =  \lim \frac{1}{d^n} \log_+ |w_n| - \lim \frac{1}{d^n} \log_+|z_n|. \label{EQN:VERTICAL_GREEN}
\end{eqnarray}
\noindent
since $|w_n| > |z_n|$ for all $n$ sufficiently large.


As mentioned in Section \ref{SUBSECTION:INVARIANCE_AND_RESTRICTION}, we can
restrict the current $T$ to any analytic curve obtaining a measure on that
curve.  Of particular interest for skew products is the restriction $\mu_{z_0}$
of $T$ to a vertical line $\{z_0\} \times \mathbb{P}$.  The following appears
as Jonsson \cite{JON_SKEW} Proposition 2.1 (i), we repeat it here for
completeness:

\begin{prop} \label{PROP:HAMONIC_MEASURE_ON_VERTICALS}
The restriction $T _{| {z=z_0}}$ of the Green's current $T$ to a vertical line
$(\{z_0\} \times \mathbb{P})$ coincides with the harmonic measure $\mu_{z_0}$ on $K_{z_0}$.
\end{prop}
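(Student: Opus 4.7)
The plan is to unwind the restriction of $T$ to the vertical projective line in terms of potentials, and then identify the resulting measure with the harmonic measure via the standard potential-theoretic characterization of the Green's function of a compact set with pole at infinity.

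First, I would localize on the affine piece $\{z_0\} \times \CC \subset \CC^2 \subset \Pt$, where Remark \ref{RMK:AFFINE_GREENS_FUNCTION} gives $T = \frac{1}{2\pi} dd^c G_{\rm affine}$. The restriction is computed by pulling back the potential along $\iota : w \mapsto (z_0, w)$, yielding $T|_{\{z_0\} \times \CC} = \frac{1}{2\pi} dd^c_w G_{\rm affine}(z_0, w)$. Since $G_p(z_0)$ is constant in $w$, the decomposition $G_{z_0}(w) = G_{\rm affine}(z_0, w) - G_p(z_0)$ gives $\frac{1}{2\pi} dd^c_w G_{\rm affine}(z_0, \cdot) = \frac{1}{2\pi} dd^c G_{z_0}$, so that the problem on the affine line reduces to identifying $\frac{1}{2\pi} dd^c G_{z_0}$ with the harmonic measure $\mu_{z_0}$ of $K_{z_0}$.

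Next, I would verify that $G_{z_0} : \CC \to [0, \infty)$ is precisely the Green's function of $\CC \setminus K_{z_0}$ with pole at infinity. The defining properties to check are: continuity and subharmonicity on $\CC$ (inherited from $G_{\rm affine}$); vanishing exactly on $K_{z_0}$ (by the definition of $K_{z_0}$); the logarithmic growth $G_{z_0}(w) = \log|w| + O(1)$ as $|w| \to \infty$ (from the monic leading term in each $q_{z_k}$); and harmonicity on $\CC \setminus K_{z_0}$. For the last property, I would use the limit formula (\ref{EQN:VERTICAL_GREEN}): on any relatively compact neighborhood $V \Subset \CC \setminus K_{z_0}$ the polynomial iterates $w \mapsto w_n(w)$ are holomorphic and escape uniformly to infinity, so for all sufficiently large $n$ one has $\log_+|w_n(w)| = \log|w_n(w)|$, which is harmonic in $w$ on $V$. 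The functions $\frac{1}{d^n}\bigl(\log|w_n(w)| - \log_+|z_n|\bigr)$ are therefore harmonic in $w$ on $V$ and converge locally uniformly to $G_{z_0}$, forcing the limit to be harmonic. Once these four properties are established, standard potential theory identifies $\frac{1}{2\pi} dd^c G_{z_0}$ with the harmonic measure $\mu_{z_0}$ of $K_{z_0}$ with respect to infinity.

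To conclude, I would upgrade the identification from $\{z_0\} \times \CC$ to the full projective line $\{z_0\} \times \mathbb{P}$: since $[0:1:0]$ is a superattracting fixed point of $f$ lying in $U(f) = \Pt \setminus \supp T$, the current $T$ has no mass in a neighborhood of $[0:1:0]$ and the restricted measure has no atom there, so the measure coincides with $\mu_{z_0}$ constructed above. The main obstacle is the harmonicity step, since each individual term $\log_+|w_n(w)|$ is only subharmonic in $w$; the argument must exploit the locally uniform escape of the iterates on $\CC \setminus K_{z_0}$ to drop the plus and work with the genuinely harmonic $\log|w_n(w)|$ on small neighborhoods for all sufficiently large $n$ before passing to the limit.
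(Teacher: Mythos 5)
Your proof is correct and follows essentially the same route as the paper: restrict the potential $G_{\rm affine}$ to the vertical line, drop the $w$-independent constant $G_p(z_0)$ inside the $dd^c$, and identify $\tfrac{1}{2\pi}\,dd^c G_{z_0}$ with the harmonic measure on $K_{z_0}$ because $G_{z_0}$ is the Green's function of $K_{z_0}$ with pole at infinity. The only difference is that the paper simply cites \cite[Thm 2.1]{JON_SKEW} for this last identification, whereas you re-derive it directly (correctly) from the limit formula for $G_{z_0}$ via locally uniform escape, which lets you drop the $\log_+$ and pass to a uniform limit of harmonic functions.
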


\begin{proof}
Notice that
\begin{eqnarray*}  T_{| {z=z_0}} &=& \frac{1}{2\pi} dd^c G_{{\rm affine}|{z=z_0}} = 
 \frac{1}{2\pi} dd^c G_{\rm affine}(z_0,w)\\ &=& \frac{1}{2\pi} dd^c \left(G_{\rm affine}(z_0,w) -
G_p(z_0)\right) = \frac{1}{2\pi} dd^c G_{z_0}(w). \end{eqnarray*}

\noindent
According to \cite[Thm 2.1]{JON_SKEW}, $G_{z_0}$ is the Green's function for
$K_z$ with pole at infinity.  We have thus obtained that $\mu_{z_0}$ is exactly
the harmonic measure $\mu_{z_0}$ on $K_{z_0}$.
\end{proof}

\subsection{Topology of the basin of attraction $W^s([0:1:0])$}
\label{SEC:MAIN_RESULT}

\begin{prop}\label{W_PATH_CONNECTED} If $\zeta$ is a totally-invariant (super)-attracting
fixed point for a holomorphic $f:\PK \rightarrow \PK$, then $W^s(\zeta)$ is path-connected.
\end{prop}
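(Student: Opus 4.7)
The plan is to prove a stronger statement than path-connectedness: that $W^s(\zeta)$ is a nested union of \emph{connected} open sets containing $\zeta$, hence itself connected. Since $W^s(\zeta)$ is an open subset of the manifold $\Pk$, it is locally path-connected, so connectedness will automatically upgrade to path-connectedness.

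First I would choose a topological ball $U$ with $\zeta \in U$ and $f(\overline{U}) \subset U$; such a $U$ exists because $\zeta$ is attracting. Iterating preimages gives the nested chain $U \subset f^{-1}(U) \subset f^{-2}(U) \subset \cdots$, and the definition of basin yields $W^s(\zeta) = \bigcup_{n \geq 0} f^{-n}(U)$, since the forward orbit of every point of $W^s(\zeta)$ eventually lands in $U$.

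The heart of the argument is an induction showing that $V_n := f^{-n}(U)$ is connected for every $n \geq 0$. The base case $V_0 = U$ is clear. Assuming $V_n$ is connected and contains $\zeta$, let $W$ be any connected component of $V_{n+1} = f^{-1}(V_n)$. Because $f:\Pk \to \Pk$ is a nonconstant holomorphic map between compact complex manifolds of equal dimension, it is proper and open, hence a finite branched covering; consequently each connected component of $f^{-1}(V_n)$ surjects onto $V_n$. In particular, $W$ contains a preimage of $\zeta$. By total invariance, $f^{-1}(\zeta) = \{\zeta\}$, forcing $\zeta \in W$. Since distinct components are disjoint, $W$ is the unique component, so $V_{n+1}$ is connected.

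The main technical point requiring care is the assertion that each connected component $W$ of $f^{-1}(V_n)$ maps \emph{onto} $V_n$. I would argue that $f(W)$ is both open in $V_n$ (using that $f$ is an open map) and closed in $V_n$ (any boundary point of $W$ in $\Pk$ that mapped into $V_n$ would belong to some component of $f^{-1}(V_n)$, and hence to $W$ itself, contradicting that it is a boundary point). Connectedness of $V_n$ then forces $f(W) = V_n$. Once the induction is complete, $W^s(\zeta) = \bigcup_n V_n$ is connected as a nested union of connected sets all containing $\zeta$, and local path-connectedness of open subsets of the manifold $\Pk$ concludes the proof.
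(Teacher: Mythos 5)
Your proof is correct and is essentially the standard argument that the paper delegates to Hubbard--Papadopol (Theorem 1.5.9 of \cite{HP}): write $W^s(\zeta)$ as the increasing union of the preimages $f^{-n}(U)$ of a small absorbing ball, and use that a non-constant holomorphic self-map of $\Pk$ is proper and open (hence every component of a preimage of a connected open set surjects onto it) together with total invariance $f^{-1}(\zeta)=\{\zeta\}$ to see inductively that each $f^{-n}(U)$ is connected. You have simply supplied the details that the paper omits by citation.
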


\noindent
A nearly identical statement is proven for $\PP$ in Theorem 1.5.9 from
\cite{HP}.  We refer the reader to their proof since it is nearly identical for
$\PK$.  In particular, for any skew product 
$W^s([0:1:0])$ is path connected.

\vspace{0.1in}
Although $G_z(w)$ is subharmonic in $w$ for any fixed $z$, it does not form a
PSH function of both $z$ and $w$.  Consider the points $(z,w) \in W^s([0:1:0])$
for which $z \in J_p$.  At these points $G_{\rm affine}$ is pluriharmonic, i.e. $dd^c
G_{\rm affine} = 0$, but $G_p(z)$ is not pluriharmonic, i.e. $dd^c G_p(z) > 0$.
Therefore, at these points $dd^c G_z(w) < 0$, so $G_z(w)$ is not PSH.

\begin{lem}\label{LEM:MINUS_G_Z_PSH}
The function $-G_z(w)$ is PSH at all points $(z,w) \in W^s([0:1:0]) \cap \CC^2$ and it
extends to a PSH function on all of $W^s([0:1:0])$.  The resulting function is
pluriharmonic on $W^s([0:1:0])$ except at points for which $Z/T \in J_p$.
\end{lem}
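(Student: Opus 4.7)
The plan is to realize the desired extended function as the descent to $\Pt$ of a scale-invariant function on $\CC^3$ built from two Green's functions whose singularities cancel precisely along $\Pi$. Let $\tilde G_P(Z,T) := \lim_{n} d^{-n} \log ||\tilde P^n(Z,T)||$ be the Green's function on $\CC^2$ for the homogeneous lift $\tilde P(Z,T) = (P(Z,T), T^d)$ of $p$, and let $G$ be the Green's function on $\CC^3$ associated to $f$ via (\ref{EQN:GREEN1}). Define
\[
\Psi(Z,W,T) := \tilde G_P(Z,T) - G(Z,W,T) \qquad \text{on } \CC^3 \setminus \{0\}.
\]
Both summands satisfy the $\log|\lambda|$-homogeneity $u(\lambda\cdot) = u(\cdot) + \log|\lambda|$ coming from $\tilde P(\lambda\cdot) = \lambda^d \tilde P(\cdot)$ and $F(\lambda\cdot) = \lambda^d F(\cdot)$, so $\Psi$ is scale-invariant and descends to a function on $\Pt$, taking the value $-\infty$ only at $[0:1:0]$. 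Using the affine section $\sigma(z,w) = (z,w,1)$, we have $\tilde G_P(z,1) = G_p(z)$ and $G(z,w,1) = G_{\rm affine}(z,w)$, so $\Psi \circ \sigma(z,w) = G_p(z) - G_{\rm affine}(z,w) = -G_z(w)$: the descended function agrees with $-G_z$ on $W^s([0:1:0]) \cap \CC^2$.

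Next I would establish PSH-ness on $\pi^{-1}(W^s([0:1:0]))$. The lifted function $(Z,W,T) \mapsto \tilde G_P(Z,T)$ is PSH on $\CC^3$ (being PSH in $(Z,T)$ and constant in $W$). The Green's function $G$ is PSH on $\CC^3$, and because $\pi^* T = \frac{1}{2\pi} dd^c G$ has support in $\pi^{-1}(J(f))$, $G$ is pluriharmonic on $\pi^{-1}(U(f)) \supset \pi^{-1}(W^s([0:1:0]))$. Hence on this set $\Psi$ is the difference of a PSH function and a pluriharmonic one, therefore PSH. Composing with any local section of $\pi$ delivers a PSH function on $W^s([0:1:0]) \subset \Pt$, giving the desired extension.

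For the pluriharmonicity claim: since $G$ is pluriharmonic throughout $\pi^{-1}(W^s([0:1:0]))$, $\Psi$ is pluriharmonic at a point exactly when $\tilde G_P$ is. By the one-variable theory applied to $p$ (Remark \ref{RMK:GREEN_1D}), $dd^c \tilde G_P$ is supported on the preimage of $J_p$ under the canonical projection $\CC^2 \setminus \{0\} \to \Po$, so $\tilde G_P$ is pluriharmonic exactly off this preimage. Descending to $\Pt$, this amounts to $[Z:T] \notin J_p$, i.e., $Z/T \notin J_p$ in affine coordinates.

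The main obstacle is that on the affine chart alone the formula $-G_z = G_p(z) - G_{\rm affine}(z,w)$ does not obviously extend across $\Pi$, because as $T \to 0$ both $G_p(Z/T)$ and $G_{\rm affine}(z,w)$ blow up with matching $-\log|\tau|$-type singularities (a direct consequence of the $\log|\lambda|$-homogeneity of the Green's potentials). Tracking this cancellation chart-by-chart and verifying consistency is doable but fussy; the lifted formulation on $\CC^3$ instead makes the cancellation manifest through the shared homogeneity and delivers a scale-invariant, globally defined PSH function in one stroke.
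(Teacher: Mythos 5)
Your proof is correct and takes a genuinely different route from the paper's. The paper argues chart-by-chart: it establishes PSH-ness on $W^s([0:1:0]) \cap \CC^2$ exactly as you do (via $-G_z = G_p - G_{\rm affine}$, with $G_p$ PSH and $G_{\rm affine}$ pluriharmonic on the basin), but the extension across $\Pi$ is outsourced to Jonsson's Lemma 6.3 in \cite{JON_SKEW}, which supplies a PSH extension of $G_z$ near $\Pi \setminus \{[0:1:0]\}$ that is in fact pluriharmonic there; finally the value $-\infty$ is assigned at $[0:1:0]$ to fill the remaining puncture. Your route lifts the entire construction to $\CC^3$: $\Psi = \tilde G_P - G$ has matched $\log|\lambda|$-homogeneities, so it is scale-invariant and descends to $\Pt$; PSH-ness on $\pi^{-1}(W^s([0:1:0]))$ is then immediate, since $\tilde G_P$ (extended constantly in $W$) is globally PSH and $G$ is pluriharmonic on the preimage of the Fatou basin. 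The extension across $\Pi$, the value at $[0:1:0]$, and the locus where pluriharmonicity fails all come for free. Your version is more self-contained and makes the cancellation of $\log$-singularities along $\Pi$ structural, at the cost of some bookkeeping about lifts; the paper's is shorter because it leans on the cited lemma. One minor point, matching an imprecision in the lemma statement itself (acknowledged in the paper's proof but not its statement): the locus where pluriharmonicity fails should also include the point $[0:1:0]$, where $\Psi = -\infty$ yet $Z/T$ is $0/0$ rather than a point of $J_p$.
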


\begin{proof}
Since $-G_z(w) = G_p(z) - G_{\rm affine}(z,w)$, with $G_{\rm affine}(z,w)$
pluriharmonic in $W^s([0:1:0])$ and $G_p(z)$ PSH everywhere, the result is PSH
in $W^s([0:1:0]) \cap \CC^2$.

Jonsson proves in \cite[Lemma 6.3]{JON_SKEW} that $G_z(w)$ extends as a PSH
function in a suitable neighborhood of $\Pi \setminus \{[0:1:0]\}$ and his proof
immediately gives that the result is pluriharmonic in a (possibly smaller)
neighborhood within $W^s([0:1:0])$ of $\Pi \setminus \{[0:1:0]\}$.  Therefore, $-G_z(w)$
is also pluriharmonic in the same neighborhood.

Thus, $-G_z(w)$ extends to a PSH on $W^s([0:1:0]) \setminus \{[0:1:0]\}$ and, assigning
$-\infty$ to $[0:1:0]$, gives the desired extension to all of $W^s([0:1:0])$.
The result will be pluriharmonic except at $[0:1:0]$ and at the points in $W^s([0:1:0]) \cap \CC^2$
where $dd^c(-G_z(w)) > 0$, that is the points where $Z/T \in J_p$.
\end{proof}

\vspace{0.1in}
\begin{proof}[Proof of Theorem \ref{THM:MAIN}:]
We first suppose that $J_{z_0}$ is disconnected for some $z_0 \in J_p$.
Let $z_1,z_2,\ldots$ be any sequence of iterated preimages of $z_0$ so that $p^n(z_n) = z_0$.

Consider the vertical line $\{z_0\} \times \CC$.  Since $J_{z_0}$ is
disconnected, so is $K_{z_0}$, and we can choose two disjoint
positively-oriented piecewise smooth loops $\eta_1, \eta_2 \subset \{z_0\}
\times \left(\CC \setminus K_{z_0}\right)$ each enclosing a proper subset of
$K_{z_0}$.

Perturbing $\eta_1, \eta_2$ within $\{z_0\} \times (\CC \setminus K_{z_0})$, if
necessary, we can suppose that none of the $d-1$ critical values of
$f|_{\{z_1\}\times \CC}: \{z_1\} \times \CC \rightarrow \{z_0\} \times \CC$
(counted with multiplicity) are on $\eta_1$ or $\eta_2$.  Since the regions
enclosed by $\eta_1$ and $\eta_2$ are disjoint, at least one of them contains
at most $d-2$ of these critical values.  Let $\gamma_0$ be this curve.

Since $\gamma_0 \subset \{z_0\} \times (\CC \setminus K_{z_0})$, $\gamma_0
\subset W^s([0:1:0])$.  Because $\gamma_0$ is compact, it is bounded away from
$\supp(T)$, and the linking number $lk(\gamma_0,T)$ is a well defined invariant
of the homology class $[\gamma]$ within $H_1(W^s([0:1:0]))$. 
We let $\Gamma_0$ be the closed disc in $\left(\{z_0\} \times \mathbb{C}\right)$ having $\gamma_0$ as its oriented boundary.  Since $\Gamma_0$
contains some proper subset of $K_{z_0}$ (and hence of $J_{z_0}$) with $\supp(\mu_{z_0}) = J_{z_0}$, we have that
\begin{eqnarray*}
0 < \left<\Gamma_0,T\right> = \int_{\Gamma_0} \mu_{z_0} < 1.
\end{eqnarray*}
\noindent
Therefore, $lk(\gamma_0,T) = \left<\Gamma_0,T\right>(\text{mod} \ 1) \neq 0 \
(\text{mod} \ 1)$, giving that $[\gamma_0]$ is non-trivial.

\vspace{0.1in}

Consider the preimages $D_1,\ldots,D_j$ of $\Gamma_0$ under $f|_{\{z_1\} \times
\CC} : \{z_1\} \times \CC \rightarrow \{z_0\} \times \CC$.  Since at most $d-2$
critical values of the degree $d$ ramified cover $f|_{\{z_1\} \times \cup D_i}$ are
contained in $\Gamma_0$, it is a consequence of the Riemann-Hurwitz Theorem
that the Euler characteristic of $\cup D_i$ is greater than or equal to $2$.
Because each $D_i$ is a domain in $\CC$, at least two
components $D_1$ and $D_2$ are discs.  

The total degree of $f|_{\{z_1\}\times \CC}: \cup D_i \rightarrow \Gamma_0$ is
$d$, so $f|_{\{z_1\} \times \CC}: D_i \rightarrow \Gamma_0$ a ramified covering
of degree $k_i \leq d-1$ for each $i$.  Proposition \ref{PROP:PAIRING_INV}
and the basic invariance $f^*T = d\cdot T$ for the Green's current give that
\begin{eqnarray}\label{EQN:SMALLER_PAIRING}
\left<D_i,T\right> = \frac{1}{d}\left<D_i,f^*T\right> = \frac{1}{d} \left<f_* D_i,T\right>  = \frac{1}{d} \left<k_i \Gamma_0,T\right> \leq \frac{d-1}{d}\left<\Gamma_0,T\right>
\end{eqnarray}
for each $i$.

As before, we can perturb the boundaries of $D_1$ and $D_2$ within $\{z_1\}
\times (\CC \setminus K_{z_1})$ so that none of the critical values of
$f|_{\{z_2\} \times \CC}$ lie on their boundaries and so that $D_1$ and $D_2$
remain disjoint.  (It will not affect the pairings given by
(\ref{EQN:SMALLER_PAIRING})).  At least one of the discs $D_1, D_2$  contains
at most $d-2$ critical values of $f|_{\{z_2\} \times C}$.  We let
$\Gamma_1$ be that disc and $\gamma_1 = \partial \Gamma_1$.  Then
\begin{eqnarray*} 
0 < \left<\Gamma_1,T\right> \leq \frac{d-1}{d} \left<\Gamma_0,T\right> \leq \frac{d-1}{d}.  
\end{eqnarray*}

Continuing in the same way, we can find a sequence of discs
$\Gamma_0,\Gamma_1,\ldots$ so that
\begin{itemize}
\item $\Gamma_n \subset \{z_n\} \times \CC$,
\item $\gamma_n = \partial \Gamma_n \subset W^s([0:1:0])$, 
\item $\Gamma_n$ contains at most $d-2$ critical values of $f|_{\{z_{n+1\}}\times \CC}$ (counted with multiplicity), and
\item $\left<\Gamma_n,T\right> \leq \frac{d-1}{d} \left<\Gamma_{n-1},T\right>$.
\end{itemize}
\noindent
Consequently,
\begin{eqnarray*}
0 < \left<\Gamma_n,T\right> \leq \left(\frac{d-1}{d}\right)^n,
\end{eqnarray*}
\noindent
giving that $lk(\gamma_n,T) \rightarrow 0$ in $\QQ/\ZZ$.  Therefore, Theorem
\ref{THM:GENERAL_TECHNIQUE} gives that $H_1(W^s([0:1:0]))$ is infinitely
generated.

\vspace{0.1in}

We will now show that if $J_z$ is connected for every $z \in J_p$, then
$W^s([0:1:0])$ is homeomorphic to an open ball.
Consider the local coordinates $z' = Z/W$, $t' = T/W$, chosen so that $(z',t')
= (0,0)$ corresponds to $[0:1:0]$.  In these coordinates
\begin{eqnarray*}
f(z',t') = \left(\frac{P(z',t')}{Q(z',1,t')},\frac{t'^d}{Q(z',1,t')}\right),
\end{eqnarray*}
\noindent
where $P$ and $Q$ are the homogeneous versions of $p$ and $q$ appearing in Equation (\ref{EQN:SKEW_HOMOG}).
The assumption that $q(z) = w^d + O_z(w^{d-1})$ and $p(z) = z^d+O(z^{d-1})$ imply that 
we have the expansion
\begin{eqnarray*}
f(z',t') = (P(z',t'),t'^d) + g(t',z'),
\end{eqnarray*}
\noindent
with $(P(z',t'),t'^d)$ non-degenerate of degree $d$ and
$g(t',z')$ consisting of terms of degree $d+1$ and larger.

Therefore, we can 
construct a
potential function\footnote{The potential function $h$ is sometimes also be called the Green's function of the point $(0,0)$.} for the superattracting point $(0,0)$:
\begin{align}\label{EQN:GREEN_FOR_POINT}
h(z',t') := \lim_{n \rightarrow \infty} \frac{1}{d^n} \log \|f^n(z',t')\|_\infty.
\end{align}
\noindent
The result is a continuous pluri-subharmonic function \cite{HP} with logarithmic
singularity at $(z',t') = (0,0)$ having the property that $(z',t') \in
W^s([0:1:0])\setminus \{[0:1:0]\}$ if and only if $h(z',t') < 0$.   In
particular,
\begin{eqnarray*}
h: W^s([0:1:0]) \setminus \{[0:1:0]\} \longrightarrow (-\infty,0)
\end{eqnarray*}
\noindent
is proper.

If we let $(z'_n,t'_n) = f^n(z',t')$, then Equation (\ref{EQN:GREEN_FOR_POINT}) simplifies to
\begin{align}
h(z',t') =
\begin{cases}
\lim \frac{1}{d^n} \log |t'_n| & \text{if $z'/t' \in K_p$ and}\\
\lim \frac{1}{d^n} \log |z'_n| & \text{if $z'/t' \not \in K_p$.}
\end{cases}
\end{align}
\noindent
since $z'_{n+1}/t'_{n+1} = p(z'_n/t'_n)$.
Equation (\ref{EQN:VERTICAL_GREEN}) gives that in the original affine coordinates $(z,w)$ we have
\begin{align}\label{EQN:EQUALITY_FOR_POTENTIAL}
h(z,w) =
\begin{cases}
\lim \frac{1}{d^n} \log |t'_n| = - \lim  \frac{1}{d^n} \log |w_n| =  -G_z(w) & \text{if $z \in K_p$ and},\\
\lim \frac{1}{d^n} \log |z'_n| = \log|z| - \lim  \frac{1}{d^n} \log |w_n| = - G_z(w) & \text{if $z \not \in K_p$},
\end{cases}
\end{align}
which is harmonic on the intersection of any vertical line $\{z\} \times \CC$
with $W^s([0:1:0])$ and pluriharmonic except when $z \in J_p$; see Lemma \ref{LEM:MINUS_G_Z_PSH}.  A similar
calculation shows
that $h$ coincides with the extension of $-G_z(w)$ described in Lemma \ref{LEM:MINUS_G_Z_PSH}
and that the restriction of $h$ to $\Pi$ is $-G_\Pi$.  (Here, $G_\Pi$ is the Green's function
for the action $f_\Pi$ of $f$ on the line at infinity.)

Therefore, $h(z',t')$ is pluriharmonic on $W^s([0:1:0]) \setminus \{(z',w') \,:\,z'/w' \in
J_p\}$ and the restriction of $h(z',t')$ to any line through $(0,0)$ is
harmonic on $W^s([0:1:0]) \setminus \{[0:1:0]\}$, with a logarithmic singularity at $(0,0)$.

Since $J_{z_0}$ is connected for every $z_0 \in J_p$,  Proposition 6.3 from
\cite{JON_SKEW} gives that $J_z$ is connected for every $z \in \CC$ and also
$J_\Pi$ is connected, or, equivalently, that $G_z(w)$ (for any $z$) and
$G_\Pi$ have no (escaping) critical points.  Therefore, the
restriction of $h$ to any complex line through $(0,0)$ has no critical points in
$W^s([0:1:0])$.

The sublevel set $W_a := h^{-1}([-\infty,a))$ is open for any $a \in
(-\infty,0)$
since $h: W^s([0:1:0]) \setminus \{[0:1:0]\} \rightarrow
(-\infty,0)$ is continuous with $h(z',t') \rightarrow -\infty$ if and only if $(z',t') \rightarrow (0,0)$.
Equation 2.2 from \cite{JON_SKEW} implies that
\begin{eqnarray*}
h(z',t') &=& \log|t'| + G_p\left(\frac{z'}{t'}\right) + \eta(z',t') \,\, \mbox{if $t' \neq 0$, and}\\
h(z',t') &=& \log|z'| + G^\#_p\left(\frac{t'}{z'}\right) + \eta(z',t') \,\, \mbox{if $z' \neq 0$,}
\end{eqnarray*}
\noindent
with $\eta(z',t')$ becoming arbitrarily small for $(z',t')$ sufficiently small
and $G^\#_p(x)$ obtained by extending $G_p(1/x) - \log(1/x)$ continuously
through $x=0$.  Therefore, for $a$ sufficiently negative, the intersection of
$W_a$ with any complex line through $(0,0)$ will be convex.  In particular, $W_a$ is an
star-convex open subset in $\CC^2$, implying that it is homeomorphic to an open
ball.  (See \cite[Theorem 11.3.6.1]{BERGER}.)

We define a new function
$\widetilde{h}$ which agrees with $h$ except in the interior of $W_a$, where we
make a $C^\infty$ modification (assigning values less than $a$) in order to remove
the logarithmic singularity at $[0:1:0]$.

We will use $\widetilde{h}$ as Morse function to show that $W_b :=
h^{-1}([-\infty,b))$ is diffeomorphic to $W_a$ for any $b \in (a,0)$.
The classical technique from Theorem 3.1 of \cite{MILNOR_MORSE} would use the
normalization of $-\nabla \widetilde{h}$ to generate a flow whose time $(b-a)$ map
gives the desired diffeomorphism.  This will not work in our situation, since
$\widetilde{h}$ is not differentiable at points for which $z'/w' \in J_p$.
However, essentially the same proof works if we replace $-\nabla
\widetilde h$ with any $C^1$ vector field $V$ on $W^s([0:1:0])$ having no
singularities in $\widetilde{h}^{-1}([a,b])$ and along which
$\widetilde h$ is decreasing.   Note that, as in \cite{MILNOR_MORSE},
we need that $\widetilde{h}^{-1}([a,b])$ is compact, which follows
from $h$ being proper.

Let $V$ be the the vector field parallel to each line through $(z',t') =
(0,0)$, obtained within each line as minus the gradient of the restriction of
$\widetilde{h}$ to that line.  The restriction of $\widetilde{h}$ to each
complex line through $(0,0)$ has no critical points in
$\widetilde{h}^{-1}([a,b])$, so it is decreasing along $V$.  Since $h$ is pluriharmonic for points with $z'/t' \not \in J_p$, it follows
immediately that $V$ is smooth there.
To see that $V$ is smooth in a
neighborhood of points where $z'/t' \in J_p$, notice that 
\begin{eqnarray*}
\nabla_w G_z(w) = \nabla_w G(z,w) - G_p(z) = \nabla_w G(z,w), 
\end{eqnarray*}
with $G(z,w)$ pluriharmonic on $W^s([0:1:0]) \cap \CC^2$.

Therefore, for any $b \in (a,0)$, $W_b$ is homeomorphic to $W_a$ and thus to an
open ball.  One can then make a relatively standard construction, using these
homeomorphisms for $b$ increasing to $0$, in order to show that $W^s([0:1:0]) =
\cup_{b < 0} W_b$ is homeomorphic to an open ball.
\end{proof}

\section{Further applications}
\label{SEC:FURTHER_APPS}

In this final section we discuss a few examples of maps to which we have applied the results of this paper, and then a few types of maps which we feel would be fruitful to study further with techniques similar to those of this paper. 

\subsection{Relationship between connectivity of $J_2$ and the topology of the Fatou set for polynomial skew products
}
\label{SEC:RELATIONSHIP_TO_MU}

For polynomial skew products,  $J_2 = \supp(\mu) = \supp(T \wedge T) = \overline{\bigcup_{z \in J_p} J_z}$, which by
\cite{JON_SKEW} is also the closure of the set of repelling periodic points. Here we examine to what extent connectivity of $J_2$ affects the homology of the Fatou set $U$.

The following example shows that there are many polynomial skew products $f$ with $J_2$ connected for which
$H_1(U(f))$ is non-trivial (in fact infinitely generated.) 

\begin{example} \label{EXAMPLE:J1DISCONNJ2CONN}
Consider $f(z,w) = (z^2-2,w^2+2(2-z))$  which has $J_2$ connected and
has $J_z$ disconnected over $z=-2 \in J_p$, as shown in 
\cite[Example 9.7]{JON_SKEW}.  Theorem \ref{THM:MAIN} immediately applies, giving that $H_1(U(f))$ is
infinitely generated.

\vspace{0.05in}
In fact, examples of this phenomenon can appear ``stably'' within a one parameter family.
Let $p_n(z) = z^2 + c_n$ be the unique quadratic polynomial with
periodic critical point of least period $n$ and $c_n$ real.  Then, 
\cite[Theorem 6.1]{S} yields that for $n$ sufficiently large,
\begin{eqnarray*}
f_n(z,w) = (p_n(z),w^2+2(2-z))
\end{eqnarray*}
is Axiom A with $J_z$ disconnected for most $z\in J_{p_n}$ and with $J_2$
connected.  Suppose that $f_n$ is embedded within any holomorphic one-parameter
family $f_{n,\lambda}$ of polynomial skew products.  Then, Theorems 4.1 and 4.2
from \cite{S} (see also, \cite[Thm C]{JON_MOTION}) give that all maps $f_{n,\lambda}$ within the same hyperbolic
component as $f_n$ also have $J_2$ connected, but $J_z$ disconnected over most
$z$ in $J_{p_{n,\lambda}}$. (Here, $p_{n,\lambda}$ is the first component of
$f_{n,\lambda}$.)  An immediate application of Theorem \ref{THM:MAIN} yields
that $H_1(U(f_{n,\lambda}))$ is infinitely generated for all $f_{n,\lambda}$
within this hyperbolic component.
 \end{example}

Next we consider the possibility of $J_2$ being disconnected, but $f$ not satisfying the hypotheses of our Theorem~\ref{THM:MAIN}.

\begin{question}
Is there a polynomial skew product $f$ with $J_2$ disconnected, but all $J_z$'s
connected for all $z \in \CC$, such that $H_1(U(f))$ is trivial?  More
generally, is there any endomorphism of $\Pt$ with $J_2$ disconnected, but with
all Fatou components having trivial homology?
\end{question}

By \cite[Proposition 6.6]{JON_SKEW}, in order for $f$ to satisfy the hypotheses of this question, $J_p$ would have to be disconnected.  However, a simple product like $(z,w) \mapsto (z^2-100, w^2)$ does not suffice; note for this map, the basin of attraction of $[1:0:0]$, hence the Fatou set, has nontrivial homology.  Not many examples of non-product polynomial skew products are understood, and the current list of understood examples contains no maps which satisfy the hypotheses of this question.

\subsection{A quadratic family of polynomial skew products}
\label{SEC:QUADRATIC_FAMILY}

We now consider the family of examples $f_a(z,w) = (z^2,w^2+az)$, which are
skew products over $p(z) = z^2$.

The geometry and dynamics in $J_p \times \CC$ were explored in \cite{S}.  For example, there it is established that:
\begin{enumerate}
\item \cite[Theorem 5.1]{S}: $f_a$ is Axiom A if and only if $g_a(w) := w^2+a$ is hyperbolic; and
\item  \cite[Lemma 5.5]{S}: $J_2$ can be described geometrically in the following manner:  $J_{e^{it}}$ is a rotation of angle $t/2$ of $J_{\{z = 1\}}$.  That is, start with $J(g_a)$ in the fiber $J_{\{z = 1\}}$, then as the base point $z=e^{it}$ moves around the unit circle $J_p = S^1$, the corresponding $J_z$'s are rotations of $J(g_a)$ of angle $t/2$, hence the $J_z$'s complete a half turn as $z$ moves once around the base circle.  
\end{enumerate}

Due to the structure of $J_2$, the difference between $f_a$ and the product
$h_a(z,w) = (z^2, w^2+a)$ is one ``twist'' in $J_2$.  In \cite{S} it is shown
that $f_a$ and $h_a$ are in the same hyperbolic component if and only if $a$ is
in the main cardiod of the Mandelbrot set, $\mathcal{M}$.

Note that the extension of $f_a$ to $\Pt$, given by $f_a([Z:W:T]) =
[Z^2,W^2+aZT:T^2]$, is symmetric under the involution $\mathcal{S}([Z:W:T]) = [T:W:Z]$.

\begin{thm}\label{THM:QUADRATIC_FAMILY}
The Fatou set of $f_a$ is the union of the basins of attraction of three
superattracting fixed points: $[0:0:1], [0:1:0]$, and $[1:0:0]$, each of which 
is path-connected.

Moreover:
\begin{itemize}
\item If $a \not \in \mathcal{M}$, then $W^s([0:1:0])$ has infinitely generated first homology.
\item If $a \in \mathcal{M}$, then each of the three basins of attraction
$W^s([0:1:0]), W^s([0:0:1])$ and $W^s([1:0:0])$ is homeomorphic to an open
ball.
\end{itemize}
\end{thm}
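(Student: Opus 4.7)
The plan is to split the Fatou set of $f_a$ into three basins (for any $a$), establish path-connectedness of each, and then handle the two cases separately: $a \notin \mathcal{M}$ via a direct application of Theorem \ref{THM:MAIN}, and $a \in \mathcal{M}$ via Theorem \ref{THM:MAIN} together with a Morse-theoretic argument for the ``corner'' basins of $[0:0:1]$ and $[1:0:0]$.

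First I would verify that $[0:0:1]$, $[0:1:0]$, and $[1:0:0]$ are each totally-invariant superattracting fixed points of $f_a$ by direct examination of preimages in the homogeneous form $f_a[Z:W:T] = [Z^2:W^2+aZT:T^2]$; I would also note that the involution $\mathcal{S}$ commutes with $f_a$ and interchanges $[0:0:1]$ with $[1:0:0]$, so their basins are homeomorphic. To show $U(f_a)$ coincides with the union of the three basins, I would use the skew-product structure: since $J_p = S^1$, any $(z,w) \in \CC^2$ with $|z| \neq 1$ is classified by whether $z_n \to 0$ or $z_n \to \infty$ and by whether $w \in K_z$ or not, placing the point in one of the three basins; meanwhile any point with $z \in J_p$ lies in $J(f_a)$ since the base dynamics is already non-normal. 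Path-connectedness of each basin then follows from Proposition \ref{W_PATH_CONNECTED}, since each fixed point is superattracting and totally invariant.

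For $a \notin \mathcal{M}$ the Julia set $J(g_a)$ of $g_a(w) = w^2+a$ is a Cantor set. By property (2) cited from \cite{S}, each $J_{e^{it}}$ is a rigid rotation of $J_{\{z=1\}} = J(g_a)$, so $J_z$ is disconnected for every $z \in J_p$. Theorem \ref{THM:MAIN} then yields immediately that $W^s([0:1:0])$ has infinitely generated first homology. For $a \in \mathcal{M}$ the Julia set $J(g_a)$, and hence every $J_z$ with $z \in J_p$, is connected, so the second bullet of Theorem \ref{THM:MAIN} gives $W^s([0:1:0])$ homeomorphic to an open ball. By the symmetry $\mathcal{S}$, it then suffices to show $W^s([0:0:1])$ is homeomorphic to an open ball when $a \in \mathcal{M}$.

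For this final step I would adapt the Morse-theoretic portion of the proof of Theorem \ref{THM:MAIN}. One constructs a Green's function $h(z,w) := \lim 2^{-n}\log \|f_a^n(z,w)\|$ for the superattracting fixed point at the origin, verifies that $h$ is PSH with a logarithmic pole at $[0:0:1]$, is proper and negative on the basin, and is pluriharmonic on $W^s([0:0:1]) \setminus \{[0:0:1]\}$ (the basin is contained in $\{|z|<1\}$, hence disjoint from $J_p \times \CC$, so the PSH-versus-pluriharmonic difficulty that arose for $W^s([0:1:0])$ does not occur here). The plan is then to flow along the vector field obtained by restricting $-\nabla \widetilde{h}$ to each complex line through the origin, exactly as in the proof of Theorem \ref{THM:MAIN}. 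Sufficiently deep sublevel sets are star-convex open subsets of $\CC^2$, hence open balls, and the flow provides a diffeomorphism between sublevel sets, yielding the result. The main obstacle is to verify that the restriction of $h$ to each complex line through the origin has no critical points in $W^s([0:0:1]) \setminus \{[0:0:1]\}$; the critical locus of $f_a$ is $\{z=0\} \cup \{w=0\}$, and the hypothesis $a \in \mathcal{M}$ should preclude escaping critical values by an argument analogous to Jonsson's \cite[Prop.~6.3]{JON_SKEW} used in the proof of the second bullet of Theorem \ref{THM:MAIN}.
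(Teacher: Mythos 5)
Your overall plan (split into three basins for all $a$, apply Theorem~\ref{THM:MAIN} to $W^s([0:1:0])$, then handle the corner basins) matches the paper's, but it runs into two genuine gaps, one in the decomposition step and one in the treatment of $W^s([0:0:1])$ for $a\in\mathcal{M}$, where the paper uses an entirely different tool.

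On the decomposition: the assertion that ``any point with $z\in J_p$ lies in $J(f_a)$ since the base dynamics is already non-normal'' is false. For $z_0\in J_p=S^1$ and $w_0\notin K_{z_0}$, the orbit escapes vertically and converges locally uniformly to $[0:1:0]$, so $(z_0,w_0)$ is in the Fatou set even though the projected base dynamics is not normal; the normality is ``rescued'' by the compactification at $[0:1:0]$. What \emph{is} true is that points over $J_p$ with $w\in K_z$ lie in $J(f_a)$. More seriously, the claim that a point $(z,w)$ with $|z|<1$ and $w\in K_z$ is necessarily in $W^s([0:0:1])$ is far from automatic: one must rule out both escaping behavior along the fiber boundary and the existence of other Fatou components over the interior of $K_p$. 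The paper proves this by constructing a local superstable manifold $W^s_{\rm loc}(J_0)$ as the image of a holomorphic motion (adapting Propositions 4.2 and 4.4 of \cite{ROE_NEWTON}) and pulling it back to a global separatrix $W^s(J_0)$; the interior of each $K_z$ for $|z|<1$ is then identified with $W^s([0:0:1])\cap(\{z\}\times\CC)$. Your sketch skips this entirely.

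On $W^s([0:0:1])$ for $a\in\mathcal{M}$: the Morse-theoretic approach does not transfer from the proof of the second bullet of Theorem~\ref{THM:MAIN} in the way you hope. Two obstacles. First, the fixed point $[0:0:1]$ has a \emph{degenerate} lowest-order jet: in affine coordinates $f_a(z,w)=(z^2,w^2+az)$, and the linear term $az$ means the jet is not a non-degenerate homogeneous degree-$d$ map, so the Hubbard--Papadapol construction of a Green's function with a standard logarithmic singularity does not apply off the shelf (the ``pole'' has different weights on the two coordinate axes, which already changes the star-convexity discussion for deep sublevel sets). Second, and more decisively, the key step in the Morse argument for $W^s([0:1:0])$ is that complex lines through $[0:1:0]$ are precisely the $f$-invariant vertical fibers $\{z\}\times\CC$ (together with $\Pi$), so $h$ restricted to such a line is $-G_z(w)$, and the absence of critical points is exactly Jonsson's criterion for $J_z$ connected. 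Lines through $[0:0:1]$ are $\{w=cz\}$ and are \emph{not} $f_a$-invariant; there is no analogous identification, and ``an argument analogous to Jonsson's'' does not exist for these slices. The paper sidesteps all of this by a completely different route: it extends the holomorphic motion underlying $W^s(J_0)$ over all of $\DD$, checks that the fiberwise critical locus $\{w=0\}$ is disjoint from $W^s(J_0)$ (via a maximum-principle argument), and then invokes Slodkowski's theorem to extend the motion to all of $\mathbb{P}^1$, exhibiting $W^s([0:0:1])$ as the image of a holomorphic motion of an open disc, hence a bidisc (and so an open ball). If you wish to avoid Slodkowski, you would need a genuinely new argument to control $h$ on lines through $[0:0:1]$; as written, this part of the proposal is a gap rather than a proof.
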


\begin{proof}
For any $a$, the fiberwise Julia set $J_0$ is the unit circle $|z| = 1$.
Proposition 4.2 from \cite{ROE_NEWTON} can be modified to show that there is a
local super-stable manifold $W^s_{\rm loc}(J_0)$ that is obtained as the image of a
holomorphic motion of $J_0$ that is parameterized over $\DD_\eps = \{|z| <
\eps\}$, for $\eps > 0$ sufficiently small.   The motion of $(0,w) \in
J_0$ is precisely the connected component of local super-stable manifold of
$(0,w)$ that contains $(0,w)$, which we will call the {\em superstable leaf of
$w$} and denote by $W^s_{\rm loc}(w)$.  By construction, $f_a$ will map the
superstable leaf of $(0,w)$ into the superstable leaf of $(0,w^2) = f_a(0,w)$.
Moreover, the proof of Proposition 4.4 from \cite{ROE_NEWTON} can also be
adapted to show that $W^s_{\rm loc}(J_0)$ is the zero locus
of a pluri-harmonic (hence real-analytic) function.

Pulling back $W^s_{\rm loc}(J_0)$ under iterates of $f_a$, we obtain a global
separatrix $W^s(J_0)$ over the entire unit disc $\DD = \{|z| = 1\}$.  Note that
$W^s(J_0)$ may not be a manifold, since ramification may occur at points where
it intersects the critical locus of $f_a$.  For $|z| < 1$, $J_z$ is the
intersection of $W^s(J_0)$ with $\{z\} \times \CC$ and that $K_z$ is the
intersection of $W^s([0:0:1])\cup W^s(J_0)$ with $\{z\} \times \CC$. 
Thus, any point $(z,w)$ with $|z| < 1$ is in $W^s([0:0:1]) \cup W^s(J_0) \cup W^s([0:1:0])$.

Under the symmetry $\mathcal{S}$, each of the above statements about the super-stable
manifold of $J_0$ corresponds immediately to a statement about the unit circle
$J_\Pi = \{|Z/W| = 1\}$ in the line at infinity $\Pi = \{T=0\}$.  Moreover,
any point in $\Pt$ with $|T| < |Z|$ is in $W^s([1:0:0]) \cup W^s(J_\Pi) \cup
W^s([0:1:0])$.  
Therefore, the Fatou set of $f_a$ is the union of basins of attraction for
three superattracting fixed points $[1:0:0]$, $[0:1:0]$, and $[0:0:1]$.  
Since each of these fixed points is totally invariant, Proposition \ref{W_PATH_CONNECTED}
gives that each of their basins of attraction is path connected.

\vspace{0.05in}

The vertical Julia $J_1$ set over the fixed fiber $z=1$ is precisely the Julia
set of $w \mapsto w^2+a$, which is connected if and only if $a \in
\mathcal{M}$.  In particular, if $a \not \in \mathcal{M}$, it follows from
Theorem \ref{THM:MAIN} that $W^s([0:1:0])$ has infinitely generated first
homology.

\vspace{0.05in}

If $a \in \mathcal{M}$, then, for each $z \in J_p$, $J_z$  is a rotation of the
connected set $J_1$ and Theorem \ref{THM:MAIN}
gives that $W^s([0:1:0])$ is homeomorphic to an open ball.  We will
now use Slodkowski's Theorem on holomorphic motions \cite{SLOD} (see also
\cite[Section 5.2]{HUBB}) to show that $W^s([0:0:1])$ and $W^s([1:0:0]) =
\mathcal{S}(W^s([0:0:1]))$ are homeomorphic to the open bidisc.  

We will extend (in the parameter $z$) the holomorphic motion whose image is
$W^s_{\rm loc}(J_0)$ to a holomorphic motion of $J_0$ parameterized by $z \in
\DD$, having the entire separatrix $W^s(J_0)$ as its image.  Then, by
Slodkowski's Theorem, this holomorphic motion extends (in the fiber $w$) from
$J_0$ to a holomorphic motion of the entire Riemann sphere $\mathbb{P}^1$ that
is also parameterized by $z \in \DD$.  Consequently, $W^s([0:0:1])$ will be the
image of a holomorphic motion of the open disc $\{z=0, |w| < 1\}$,
parameterized by $z \in \DD$.

\vspace{0.05in}
Since $a \in \mathcal{M}$, it also follows from \cite[Proposition
6.4]{JON_SKEW} that for each $z \in \CC$ the fiber-wise critical points 
\begin{eqnarray*}
C_z := \{w \in \CC \,:\,q_z'(w) = 0\}
\end{eqnarray*}
\noindent
 are in $K_z$.  We now check that they are disjoint from
$W^s(J_0)$. 

The union of these fiber-wise critical points is just the horizontal line $w=0$
that stays on one side of $W^s(J_0)$, possibly touching at many points.  Note,
however that they are disjoint at $z=0$.  Consider the point $z_0$ (with $|z_0|
< 1$) of smallest modulus where $w=0$ and $W^s(J_0)$ touch.  Then, there is a
neighborhood of $U$ of $z_0$ in $\CC^2$ in which $W^s(J_0)$ is given by the
zero set of a PSH function $\Psi$.  Changing the sign of $\Psi$ (if necessary)
we can assume that $\Psi \leq 0$ for points in $K_z \cap U$.  The restriction
$\psi(z) =\Psi|_{w=0}$ is a non-positive harmonic function in a neighborhood of
$z_0$ having $\psi(z_0) = 0$, but $\psi(z) < 0$ for $z$ with $|z|< |z_0|$.
This violates the maximum principle.  Therefore, the fiber-wise critical points $C_z$ 
are disjoint from $W^s(J_0)$ for every $z$.

\vspace{0.05in}
Suppose that $\mathcal{D} \subset W^s(J_0)$ is the graph of a
holomorphic function $\nu(z)$ defined on $\{|z| < r\}$, for some $0 < r < 1$.
Then, since $W^s(J_0)$ is disjoint from the horizontal critical locus $w = 0$,
the Implicit Function Theorem gives that $f_a^{-1}(\mathcal{D})$ is
the union of two discs through the pre-images of $\nu(0)$, each given as the graph
of a holomorphic function over $\{|z| < \sqrt{r}\}$.

Let $(0,w) \in J_0$ with preimages $(w_1,0)$ and $(w_2,0)$.  Since
$f_a(W^s_{\rm loc}(w_{1,2})) \subset W^s_{\rm loc}(w)$, the two discs from
$f_a^{-1}(W^s_{\rm loc}(w))$ form extensions of $W^s_{\rm loc}(w_{1})$ and
$W^s_{\rm loc}(w_{2})$, as graphs of holomorphic functions of $|z| <
\sqrt{\eps}$.

Therefore, by taking the preimages under $f_a$, the family of local stable
discs can be extended, each as the graph of a holomorphic function over $|z| <
\sqrt{\eps}$.  Applied iteratively, we can extend them as the graphs of
holomorphic functions over discs $|z| < r$ for any $r < 1$.  In the limit we
obtain global stable curves $W^s(w_0)$ through every $w_0 \in J_0$, each of
which is the graph if a holomorphic function of $z \in \DD$.  Since the global
stable curves of distinct points in $J_0$ are disjoint, their union gives
$W^s(J_0)$ as the image of a holomorphic motion of $J_0$ parameterized by $z
\in \DD$.  \end{proof}


\subsection{Postcritically Finite Holomorphic Endomorphisms}

Until presenting the conjecture of the previous subsection, this paper has been
about endomorphisms with complicated Fatou topology.  The opposite extreme is
that the Fatou topology may also be trivial in many cases.  We suspect one
simple case in which Fatou topology is trivial is when the map is
postcritically finite (PCF).

\begin{question}
Does the Fatou set of a postcritically finite holomorphic endomorphism of $\Pt$ always have trivial homology?
\end{question}

A starting point for investigation into this question could be to attempt to establish it for the postcritically finite examples constructed by Sarah Koch \cite{KochFRphd, KochUSphd}. Heuristic evidence supports that the homology is trivial for Koch's maps.  Her construction provides a class of PCF endomorphisms, containing an infinite number of maps, including the previously studied examples of  \cite{FSpcf} and \cite{CrassPCF}.

\subsection{Other holomorphic endomorphisms of $\Pk$}

As we have demonstrated in Sections~\ref{SEC:ENDO} and~\ref{SEC:SKEW},
given some information about the geometry of the support of $T$, we can apply
the techniques of Sections~\ref{SEC:LINKING} to study the Fatou set of a
holomorphic endomorphism of $\Pt$.  We would like to be able to apply this
theorem to other holomorphic endomorphisms of $\Pk$.  However, specific examples of
holomorphic endomorphisms that are amenable to analytic study are notoriously
difficult to generate.  

One family of endomorphisms which seem a potentially vast area of study are the H\'{e}non-like endomorphisms.  Introduced by Hubbard and Papadapol in \cite{HP2}, and studied a bit further by  Forn{\ae}ss and Sibony in \cite{FSexamples}, these are holomorphic endomorphisms arising from a certain perturbation of the \Henon diffeomorphisms.  The \Henon diffeomorphisms have been deeply studied (e.g., by Bedford Lyubich, and Smillie, \cite{BLS1,BS4}, Bedford and Smillie \cite{BS6, BS9}, Hubbard and Oberste-Vorth \cite{HO1, HO2}, and Forn{\ae}ss and Sibony \cite{FSHenon}).  A natural question which is thus far quite wide open is: how does the dynamics of a \Henon diffeomorphism relate to the dynamics of the perturbed \Henon endomorphism?  Computer evidence suggests the dynamics of H\'{e}non-like endomorphisms is rich and varied.

Specifically concerning the topology of the Fatou set, the main result of
\cite{BS6} is that connectivity of the Julia set is determined by connectivity
of a slice Julia set in a certain unstable manifold.  We ask whether this
result would have implications for the related \Henon endomorphism, which would
allow us to use linking numbers to establish some analog of
Theorem~\ref{THM:MAIN} for \Henon endomorphisms.


%
%

\bibliographystyle{plain}
\bibliography{newton.bib}

\end{document}